\documentclass{amsart}
%
%%%%%%%%%%%%%--PREAMBLE--%%%%%%%%%%%%%%%%%%
%\documentclass[a4paper,10pt]{article}
\usepackage{mathrsfs,amssymb,mathrsfs, multirow,setspace}
%\doublespacing
\onehalfspacing
\usepackage[a4paper, total={7in, 9in}]{geometry}
\usepackage{enumerate}
\theoremstyle{plain}
%\graphicspath{ {images/} }
\usepackage{relsize}
\usepackage{graphicx}
\newtheorem{theorem}{Theorem}[section]
\newtheorem{lemma}[theorem]{Lemma}
\newtheorem{proposition}[theorem]{Proposition}
\newtheorem{corollary}[theorem]{Corollary}

\theoremstyle{definition}

\newtheorem{example}[theorem]{Example}

\theoremstyle{remark}

\input xy
\xyoption{all}

%%%%%%%%%%%%%%%--BODY--%%%%%%%%%%%%%%%%%%
\begin{document}

	\title[On the idempotent graph of a ring]{On the idempotent graph of a ring}
\author[Praveen Mathil, Barkha Baloda, Jitender Kumar]{Praveen Mathil, Barkha Baloda, Jitender Kumar*}
	\address{Department of Mathematics, Birla Institute of Technology and Science Pilani, Pilani-333031, India}
	\email{maithilpraveen@gmail.com, barkha0026@gmail.com, jitenderarora09@gmail.com}

	%\date{...}
\begin{abstract}
Let $R$ be a ring with unity. The \emph{idempotent graph} $G_{\text{Id}}(R)$ of a ring $R$ is an undirected simple graph whose vertices are the set of all the elements of ring $R$ and two vertices $x$ and $y$  are adjacent if and only if $x+y$ is an idempotent element of $R$. In this paper, we obtain a necessary and sufficient condition on the ring $R$ such that $G_{\text{Id}}(R)$ is planar. We prove that $G_{\text{Id}}(R)$ cannot be an outerplanar graph. Moreover, we classify all the finite non-local commutative rings $R$ such that $G_{\text{Id}}(R)$ is a cograph, split graph and threshold graph, respectively. We conclude that latter two graph classes of $G_{\text{Id}}(R)$ are equivalent if and only if $R \cong \mathbb{Z}_2 \times \mathbb{Z}_2 \times \cdots \times \mathbb{Z}_2$.
\end{abstract}

\subjclass[2010]{05C25}
	
\keywords{Non-local ring, idempotent elements, planar graph, forbidden graph classes\\ *  Corresponding author}
	
\maketitle

\section{Introduction}
Attaching a graph to algebraic structures, viz. groups, rings, vector spaces, and study their properties using graph theoretic parameters, became an interesting research area in the last thirty years. The study of these algebraic graphs leads to an interplay between properties of respective algebraic structure and graph theoretic properties of graphs associated with them. In 1988, Beck \cite{beck1988coloring} introduced the idea of associating a graph to an algebraic structure by giving the notion of the zero-divisor graphs of rings. Later on, Anderson and Livingston \cite{anderson1999zero} modified the definition of the zero-divisor graph of a commutative ring. Many authors defined and studied the graphs associated to ring structures viz. zero-divisor graph, total graph, unit graph, co-maximal graph, annihilating-ideal graph etc. (see \cite{afkhami2011cozero, akbari2009total, anderson2008zero, anderson2008total, anderson1999zero, ashrafi2010unit, behboodi2011annihilating, biswas2022subgraph, maimani2008comaximal}). Numerous authors have investigated the embedding of aforementioned algebraic graphs such that they can be drawn on a plane without edge crossing. The planarity of the zero-divisor graphs has been investigated in \cite{akbari2003zero,belshoff2007planar}. For more work related to the planarity of various graphs associated to rings, we refer the reader to \cite{belsi2021genus,jafari2010planarity, kavitha2017genus,maimani2012rings,su2015rings} and references therein.

Idempotent elements of a ring play an important role in the study of ring structure. To investigate the ring properties through their zero-divisors, idempotent elements, nilpotent elements, regular elements etc., many researchers have introduced and studied the algebraic graphs with respect to these elements (see \cite{akbari2013idempotent,anderson2012neumann,anderson1999zero,basnet2021nilpotent,cvetko2008indecomposability,razaghi2021graph}). Razzaghi \emph{et al.} \cite{razaghi2021graph} introduced the idempotent graph of a ring. They have studied certain graph theoretic properties including connectedness, girth and the diameter of the idempotent graph of a ring. The \emph{idempotent graph} $G_{\text{Id}}(R)$ of a ring $R$ is an undirected simple graph with vertex set $R$ and two vertices $x$ and $y$  are adjacent if and only if $x+y$ is an idempotent element of $R$.

In this paper, we obtain a necessary and sufficient condition on a finite non-local commutative ring $R$ such that the graph $G_{\text{Id}}(R)$ is planar. We also prove that $G_{\text{Id}}(R)$ cannot be an outerplanar graph. Moreover, we classify all the finite non-local commutative rings $R$ such that $G_{\text{Id}}(R)$ is a split graph, threshold graph and cograph, respectively. The paper is arranged as follows: Section 2 comprises basic definitions and relevant results from graph theory and ring theory. In Section 3, we discuss the planarity and outerplanarity of $G_{\text{Id}}(R)$. In Section 4, we determine all the finite non-local commutative rings $R$ for which  $G_{\text{Id}}(R)$ lies in some forbidden graph class, viz. split graph, threshold graph and cograph.

\section{Preliminaries}

In this section, we recall  necessary definitions, results and notations of graph theory from \cite{westgraph}.
A graph $\Gamma$ is a pair $ \Gamma = (V, E)$, where $V = V(\Gamma)$ and $E = E(\Gamma)$ are the set of vertices and edges of $\Gamma$, respectively.  A \emph{subgraph} $\Gamma'$ of $\Gamma$ is the graph such that $V(\Gamma') \subseteq V(\Gamma)$ and $E(\Gamma') \subseteq E(\Gamma)$. If $X \subseteq V(\Gamma)$, then the subgraph of $\Gamma$ induced by $X$, denoted by $\Gamma(X)$, is the graph with vertex set $X$ and two vertices of $\Gamma(X)$ are adjacent if and only if they are adjacent in $\Gamma$. Two distinct vertices $x, y \in \Gamma$ are $\mathit{adjacent}$, denoted by $x \sim y$ or $(x,y)$, if there is an edge between $x$ and $y$. Otherwise, we write it as $x \nsim y$. Let  $\Gamma_1$ and  $\Gamma_2$ be two graphs. The \emph{union} $\Gamma_1 \cup \Gamma_2$ of the graphs $\Gamma_1$ and $\Gamma_2$ is the graph with $V(\Gamma_1 \cup \Gamma_2) = V(\Gamma_1) \cup V(\Gamma_2)$ and $E(\Gamma_1 \cup \Gamma_2) = E(\Gamma_1) \cup E(\Gamma_2)$. A graph $\Gamma$ is said to be \emph{complete} if every two distinct vertices are adjacent. The complete graph on $n$ vertices is denoted by $K_n$. A \emph{path} $P_n$ in a graph is a sequence of $n$ distinct vertices with the property that each vertex in the sequence is adjacent to the next vertex of it.
A \emph{cycle} is a path that begins and ends on the same vertex. A cycle of length $n$ is denoted by $C_n$. A graph $\Gamma$ is \emph{bipartite}  if $V(\Gamma)$ is the union of two disjoint independent  sets. A graph $\Gamma$ is said to be a \emph{complete bipartite graph}, denoted by $K_{m,n}$, if the vertex set $V(\Gamma)$ can be partitioned into two nonempty sets $A$ of $m$ vertices and $B$ of $n$ vertices such that two distinct vertices are adjacent if and only if they do not belong to the same set. The \emph{subdivision} of the edge $(u,v)$ in a graph $\Gamma$ is the deletion of the edge $(u,v)$ from $\Gamma$ and the addition of two edges $(u,w)$ and $(w,v)$ along with a new vertex $w$. A graph obtained from $\Gamma$ by a sequence of edge subdivisions is called a subdivision of $\Gamma$. \emph{Contraction} of the edge $(u,v)$ of $\Gamma$, denoted by the vertex $[u,v]$, is an operation that deletes the edge $(u,v)$ from $\Gamma$ and merges the vertices $u$ and $v$ in $\Gamma$. If $u_1 \sim u_2 \sim \cdots \sim u_k$, where $k \ge 2$, then the vertex obtained by the contraction of the edges $(u_1, u_2)$, $(u_2, u_3)$, \ldots, $(u_{k-1}, u_k)$ is denoted by $[u_1, u_2, \ldots, u_k]$. An undirected graph $\Gamma'$ is a minor of another undirected graph $\Gamma$ if $\Gamma'$ can be obtained from $\Gamma$ by contracting or deleting some edges. Two graphs are said to be \emph{homeomorphic} if both can be obtained from the same graph by subdivision or contracting of edges.
 
A graph $\Gamma$ is \emph{outerplanar} if it can be embedded in the plane such that all vertices lie on the outer face.  A graph $\Gamma$ is \emph{planar} if it can be drawn on a plane without edge crossing. It is well known that every outerplanar graph is a planar graph. Now we have the following known results related to outerplanar and  planar graphs.

\begin{theorem}\cite{westgraph}\label{outerplanar criteria}
A graph $\Gamma$ is outerplanar if and only if it  does not contain a subdivision of $K_4$ or $K_{2,3}$.
\end{theorem}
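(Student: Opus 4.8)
The plan is to reduce this characterization to the classical planarity criteria by means of the ``coning'' construction. For a graph $\Gamma$, let $\widehat{\Gamma} = \Gamma \vee K_1$ denote the graph obtained from $\Gamma$ by adjoining one new vertex $v$ and joining it to every vertex of $\Gamma$. I would establish the following chain of equivalences: $\Gamma$ is outerplanar $\iff$ $\widehat{\Gamma}$ is planar $\iff$ $\widehat{\Gamma}$ has neither $K_5$ nor $K_{3,3}$ as a minor $\iff$ $\Gamma$ has neither $K_4$ nor $K_{2,3}$ as a minor $\iff$ $\Gamma$ contains no subdivision of $K_4$ or $K_{2,3}$. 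Here the second equivalence is Wagner's theorem, and the last one holds because $K_4$ and $K_{2,3}$ both have maximum degree at most three, and for such a graph $H$ one has that $\Gamma$ has an $H$-minor if and only if $\Gamma$ contains a subdivision of $H$. Thus the real content lies in the first and the third equivalences.

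For the coning equivalence ``$\Gamma$ outerplanar $\iff$ $\widehat{\Gamma}$ planar'', I would first reduce to the case where $\Gamma$ is $2$-connected: outerplanarity, planarity, and the presence of a subdivision of $K_4$ or $K_{2,3}$ are all properties that hold for $\Gamma$ if and only if they hold for each block of $\Gamma$ (such a subdivision, being $2$-connected, lies inside a single block), and the statement for general $\Gamma$ follows from the statement for its blocks. Assuming $\Gamma$ is $2$-connected: given an outerplanar embedding of $\Gamma$, all its vertices lie on the boundary of a single face $F$, which is bounded by a cycle; placing $v$ in the interior of $F$ we may draw the new edges from $v$ to the vertices of $\Gamma$ inside $F$ without crossings (route them in the cyclic order in which the vertices appear along $\partial F$, like the spokes of a wheel), obtaining a planar embedding of $\widehat{\Gamma}$. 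Conversely, given a planar embedding of $\widehat{\Gamma}$, deleting $v$ merges all the faces incident with $v$ into a single face, and since $v$ is adjacent to every vertex of $\Gamma$, the boundary walk of this merged face meets every vertex; hence the induced embedding of $\Gamma$ is outerplanar.

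For the third equivalence I would argue directly with branch sets. If $\Gamma$ has a $K_4$-minor with branch sets $V_1, V_2, V_3, V_4$, then adjoining the singleton branch set $\{v\}$ (which is adjacent to each $V_i$ since $v$ is universal) exhibits a $K_5$-minor in $\widehat{\Gamma}$; similarly a $K_{2,3}$-minor of $\Gamma$ together with $\{v\}$ adjoined to the side of size two yields a $K_{3,3}$-minor of $\widehat{\Gamma}$. Conversely, suppose $\widehat{\Gamma}$ has a $K_5$-minor with branch sets $W_1, \dots, W_5$. The vertex $v$ lies in at most one $W_i$; if in none, all $W_i \subseteq V(\Gamma)$ and $\Gamma$ already has a $K_5$-minor and hence a $K_4$-minor, so say $v \in W_5$. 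Then $W_1, \dots, W_4 \subseteq V(\Gamma)$, each is connected in $\Gamma$ (the connecting edges do not use $v$), and the edges witnessing the adjacencies $W_i \sim W_j$ for $i,j \le 4$ do not use $v$ either, so they lie in $\Gamma$; hence $W_1, \dots, W_4$ form a $K_4$-minor of $\Gamma$. The case of a $K_{3,3}$-minor of $\widehat{\Gamma}$ is entirely analogous: if $v$ lies in no branch set then $\Gamma$ itself has a $K_{3,3}$-minor and hence a $K_{2,3}$-minor, while if $v$ lies in one branch set, deleting that branch set leaves five branch sets in $\Gamma$ realizing the adjacencies of $K_{3,3}$ minus a vertex, i.e.\ a $K_{2,3}$-minor of $\Gamma$.

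The part I expect to be the main obstacle is the coning equivalence, and within it the ``only if'' direction that recovers an outerplanar embedding of $\Gamma$ from a planar embedding of $\widehat{\Gamma}$: one must argue carefully about the combinatorial structure of the face of $\widehat{\Gamma}-v$ created by deleting $v$ and check that it genuinely has every vertex of $\Gamma$ on its boundary, which is exactly where $2$-connectedness and the universality of $v$ enter. The remaining ingredients — Wagner's theorem, the minor/subdivision equivalence in maximum degree three, and the branch-set bookkeeping above — are routine once this reduction is in hand.
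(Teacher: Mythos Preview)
Your proposal is correct and follows the standard ``coning'' route to the outerplanarity criterion. The paper, however, does not prove this statement at all: it is quoted as Theorem~\ref{outerplanar criteria} with a citation to West's textbook and is used only as a black box in the proof of Theorem~\ref{outerplanarity}. So there is no proof in the paper to compare against; you have supplied a proof where the paper supplies a reference.

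A couple of small remarks on your write-up. First, the reduction to $2$-connected blocks is not actually needed for the coning equivalence: even when $\Gamma$ is disconnected or has cut vertices, an outerplanar drawing still has all vertices on the unbounded face, and a single apex $v$ placed there can be joined to every vertex without crossings; conversely, in any planar drawing of $\widehat{\Gamma}$ each vertex of $\Gamma$ lies at the tip of an edge to $v$ and hence on the boundary of the region obtained by deleting $v$. Your block reduction is harmless, just unnecessary. Second, in the minor direction of the third equivalence you should note (as you implicitly do) that when $v$ lies in a branch set $W$, the remaining branch sets lie entirely in $\Gamma$ and are connected there because none of their internal edges uses $v$; this is what makes the deletion of $W$ clean. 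With those observations your argument is complete.
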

\begin{theorem}\cite{westgraph}\label{planar criteria}
A graph $\Gamma$ is planar if and only if it does not contain a subdivision of $K_5$ or $K_{3,3}$.
\end{theorem}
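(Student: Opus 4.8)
This is the classical Kuratowski theorem, so I will sketch the two directions separately. The forward implication is the elementary one: first I would record, via Euler's formula, that $K_5$ and $K_{3,3}$ are themselves non-planar — a simple planar graph on $n\ge 3$ vertices has at most $3n-6$ edges, which $K_5$ violates ($10>9$), and a simple bipartite planar graph on $n\ge 3$ vertices has at most $2n-4$ edges, which $K_{3,3}$ violates ($9>8$). Since a subgraph of a planar graph is planar and since subdividing an edge (or suppressing a degree-two vertex) does not affect planarity, any graph that contains a subdivision of $K_5$ or of $K_{3,3}$ is non-planar; this gives ``planar $\Rightarrow$ no Kuratowski subdivision''.

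For the converse — that a graph $\Gamma$ with no subdivision of $K_5$ or $K_{3,3}$ is planar — the plan proceeds in four stages. \emph{Stage 1} replaces ``subdivision'' by ``minor'': I would prove the lemma that $\Gamma$ contains a subdivision of $K_5$ or $K_{3,3}$ if and only if $\Gamma$ contains $K_5$ or $K_{3,3}$ as a minor. The nontrivial half uses that $K_{3,3}$ has maximum degree $3$, so a $K_{3,3}$-minor immediately yields a $K_{3,3}$-subdivision, together with a short branch-vertex argument showing that a $K_5$-minor which is not realised as a $K_5$-subdivision nonetheless produces a $K_{3,3}$-subdivision. Thus it suffices to show that a graph with no $K_5$- and no $K_{3,3}$-minor is planar. \emph{Stage 2} reduces to the $3$-connected case: a graph is planar iff each of its blocks is planar, so I may assume $\Gamma$ is $2$-connected; and if $\Gamma$ has a $2$-separation with cut set $\{u,v\}$, I would add a ``virtual'' edge $uv$ to each side, check that neither side thereby acquires a $K_5$- or $K_{3,3}$-minor (a minor in one side lifts to a minor in $\Gamma$, using a $u$–$v$ path through the other side to realise the virtual edge), embed the two strictly smaller pieces by induction, and glue the plane embeddings along the common edge $uv$. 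Iterating this, the problem is reduced to $3$-connected graphs.

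\emph{Stages 3 and 4} handle the $3$-connected case by induction on $|V(\Gamma)|$, with base case $\Gamma=K_4$. Stage 3 is Tutte's contractible-edge lemma: every $3$-connected graph on at least five vertices has an edge $e$ such that the contraction $\Gamma/e$ is again $3$-connected — which I would prove by contradiction, choosing among all edges and all separating pairs witnessing non-$3$-connectedness of the contractions one that maximises the order of a largest component of the remainder, and deriving a contradiction. Stage 4 then uncontracts: given such $e=xy$, $\Gamma/e$ inherits the property of having no $K_5$- or $K_{3,3}$-minor, hence is planar by the inductive hypothesis; fix a plane embedding of $\Gamma/e$. Deleting the contracted vertex $v_e$ leaves a face whose boundary is a cycle $C$ containing every neighbour of $x$ and every neighbour of $y$ (apart from each other, by $3$-connectivity). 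One analyses the cyclic orders in which $N(x)\setminus\{y\}$ and $N(y)\setminus\{x\}$ meet $C$: if these two sets ``interleave nicely'' on $C$, then $x$ and $y$ can both be placed inside that face and reconnected to their neighbours without crossings, yielding a plane embedding of $\Gamma$; and if they do not, the forbidden configuration is precisely a subdivision of $K_5$ or $K_{3,3}$ inside $\Gamma$ (a handful of cases according to how the two neighbourhoods overlap along $C$, e.g. at least three common neighbours, or a crossing pattern of the two remaining neighbourhoods), contradicting the hypothesis and completing the induction.

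I expect the main obstacle to be the re-embedding argument in Stage 4 — the case analysis that extracts a concrete $K_5$- or $K_{3,3}$-subdivision from a ``bad'' distribution of the two neighbourhoods on the cycle $C$, where one must track disjoint connecting paths carefully. The second most delicate point is the bookkeeping in Stage 2: verifying that inserting virtual edges along $2$-separations neither creates a forbidden minor nor obstructs the gluing of the partial plane embeddings.
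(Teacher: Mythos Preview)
Your sketch is a correct outline of the standard Thomassen-style proof of Kuratowski's theorem: the Euler-formula argument for the forward direction, the subdivision/minor equivalence, the reduction to $3$-connected graphs via $2$-separations with virtual edges, Tutte's contractible-edge lemma, and the uncontracting case analysis on the face cycle are all accurately described, and your identification of Stage~4 as the place requiring the most care is apt.

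However, the paper does not prove this theorem at all. The statement is simply quoted from West's textbook \cite{westgraph} as background, with no argument given; it functions as a tool to be applied later (in the proof of Theorem~\ref{planarity}), not as a result to be established. So there is nothing to compare your approach against: you have supplied a genuine proof outline where the paper intentionally offers none. If your goal were only to match the paper, a one-line citation would suffice; what you have written is substantially more than the paper contains on this point.
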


A graph $\Gamma$ is a \emph{split} graph if the vertex set is the disjoint union of  two sets $A$ and $B$, where $A$ induces a complete subgraph and $B$ is an independent set $\Gamma$.

\begin{lemma}\label{splitgraph}\cite{MR0505860}
A graph $\Gamma$ is a split graph if and only if it does not have an induced subgraph isomorphic to one of the three forbidden graphs, $C_4, C_5$ or $2K_2$.
\end{lemma}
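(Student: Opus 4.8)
This is the classical F\"oldes--Hammer forbidden-induced-subgraph characterisation of split graphs, so the plan is the usual two-part argument. For the easy direction, I would first observe that an induced subgraph of a split graph is again split, since a clique/independent-set partition restricts to any induced subgraph; hence it suffices to check that none of $C_4$, $C_5$, $2K_2$ is itself a split graph. Each of these three has clique number $2$ and independence number $2$, and a split graph on $n$ vertices satisfies $n \le \omega + \alpha$; this rules out $C_5$ immediately (since $5 > 4$), and for the two $4$-vertex graphs $C_4$ and $2K_2$ one checks by hand that the complement of any edge is again an edge, so the vertex set cannot be split into an edge and a non-adjacent pair. Thus a split graph contains no induced $C_4$, $C_5$ or $2K_2$.

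For the converse, suppose $\Gamma$ contains none of $C_4$, $C_5$, $2K_2$ as an induced subgraph; we may assume $\Gamma$ has at least one edge. Among all maximum cliques of $\Gamma$, I would fix one, say $K$, for which the induced subgraph $\Gamma(V(\Gamma) \setminus K)$ has the fewest edges, and then show this subgraph is edgeless --- at which point $\{K,\, V(\Gamma) \setminus K\}$ is the required split partition. Assuming instead that $xy$ is an edge with $x, y \notin K$, maximality of $K$ forces $x$ to have a non-neighbour $a \in K$ and $y$ to have a non-neighbour $b \in K$. One then runs a case analysis according to whether $a = b$, how many non-neighbours $x$ and $y$ have in $K$, and how the remaining vertices of $K$ attach to $x$ and $y$; in each branch one either exhibits an induced $2K_2$, $C_4$ or $C_5$ on four or five vertices taken from $\{x, y, a, b\}$ together with suitable clique vertices, or --- in the case where $x$ has the unique non-neighbour $a$ in $K$ --- one contradicts the minimality of the edge count by replacing $K$ with the equally large clique $(K \setminus \{a\}) \cup \{x\}$ and comparing the edge sets of the two ``outside'' subgraphs.

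The bulk of the proof, and the step I expect to be the main obstacle, is this case analysis --- in particular the configurations where no forbidden subgraph is immediately visible (typically $a = b$ with the other clique vertices adjacent to exactly one of $x, y$), where one must simultaneously use $2K_2$-freeness, $C_4$-freeness, and the extremal choice of $K$. A convenient organising remark is that the forbidden family is self-complementary ($\overline{2K_2} = C_4$ and $\overline{C_5} = C_5$), so the hypothesis is preserved under complementation; this makes the ``clique side'' and the ``independent side'' of the argument symmetric and shortens the bookkeeping. Since the statement is a classical result one may simply cite \cite{MR0505860}; the outline above is the self-contained route I would take.
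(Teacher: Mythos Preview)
The paper does not prove this lemma at all: it is stated with a citation to \cite{MR0505860} and used as a black box (only the $2K_2$-freeness part is ever invoked, in Theorem~\ref{splitgraphtheorem}). Your outline is a correct sketch of the standard F\"oldes--Hammer argument, so you have simply supplied more than the paper does; if you want to match the paper exactly, a bare citation suffices.
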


A graph $\Gamma$ is said to be a \emph{cograph} if it has no induced subgraph isomorphic to $P_4$. A \emph{threshold} graph is the graph which does not contain an induced subgraph isomorphic to $P_4, C_4$ or $2K_2$, where $2K_2$ denotes the two copies of the complete graph $K_2$. Every threshold  graph is a cograph as well as a split graph. A \emph{cactus graph} is a connected graph where any two simple cycles have at most one vertex in common. A connected graph is said to be \emph{unicyclic} if it contains exactly one cycle. 

Throughout the paper, $R$ is a finite non-local commutative ring with unity. For basic definitions of ring theory, we refer the reader to \cite{atiyah1994introduction}. For $x \in R$, $\langle x \rangle$ denotes the ideal generated by $x$. An  element $x$ of a ring $R$ is said to be an \emph{idempotent} element if $x^2 = x$. The set of idempotent elements of ring $R$ is denoted by $\text{Id}(R)$. The \emph{characteristic} of a ring $R$ with unity $1$, denoted by $char(R)$, is the smallest positive integer $n$ such that $n \cdot 1 = 0$. Let $R$ be a finite non-local commutative ring. 
 By the structural theorem (see \cite{atiyah1994introduction}),  $R$ is uniquely (up to isomorphism) a finite direct product of  local rings $R_i$ that is $R \cong R_1 \times R_2 \times \cdots \times R_n$, where $n \geq 2$.

%%%%%%%%%%%%%%%%%%%%%%%%%%%%%%%%%%%%%%%%%%%%%%%%%%%%%%%%%%%%%%%%%%%%%%%%%%%%%%%%%%%%%%%%%%%
The following results on $G_{\textnormal{Id}}(R)$ are useful in the sequel.

\begin{proposition}{\cite[Proposition 2.2]{razaghi2021graph}}\label{idempotent_degree}
Let $R$ be a ring, $x \in R$ and $\textnormal{Id}(R)$ be finite. Then the following statements hold for $G_{\textnormal{Id}}(R)$:
\begin{enumerate}[\rm(i)]
    \item If $2x \in \textnormal{Id}(R)$, we have $deg (x) = \textnormal{Id}(R)-1$; and

    \item If $2x \not\in \textnormal{Id}(R)$, we have $deg (x) = \textnormal{Id}(R)$.
\end{enumerate}
 \end{proposition}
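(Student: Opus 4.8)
The plan is a direct counting argument built on a translation bijection. Fix $x \in R$. Since $G_{\textnormal{Id}}(R)$ is a simple graph, the neighbourhood $N(x)$ consists exactly of those $y \in R$ with $y \neq x$ and $x + y \in \textnormal{Id}(R)$. First I would examine the map $\varphi \colon N(x) \to \textnormal{Id}(R)$ given by $\varphi(y) = x + y$. Because translation by $x$ is a bijection of $R$, the map $\varphi$ is injective; moreover its image is precisely $\{\, e \in \textnormal{Id}(R) : e \neq 2x\,\}$, since for each idempotent $e$ the unique solution of $x + y = e$ is $y = e - x$, and this solution satisfies $y \neq x$ exactly when $e \neq 2x$. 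Consequently
\[
\deg(x) = |N(x)| = \bigl|\{\, e \in \textnormal{Id}(R) : e \neq 2x\,\}\bigr|.
\]

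From here the two cases are immediate. If $2x \in \textnormal{Id}(R)$, then exactly one idempotent, namely $2x$ itself, is omitted from the count, so $\deg(x) = |\textnormal{Id}(R)| - 1$, which is statement (i). If $2x \notin \textnormal{Id}(R)$, then no idempotent is omitted and $\deg(x) = |\textnormal{Id}(R)|$, which is statement (ii).

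There is no genuine obstacle in this argument; the only point needing care is that $G_{\textnormal{Id}}(R)$ is simple, so $x$ is never adjacent to itself even if $2x$ happens to be idempotent — this ``absent loop'' is exactly what accounts for the $-1$ in case (i). Finiteness of $\textnormal{Id}(R)$ is invoked solely to ensure that the cardinality $|\textnormal{Id}(R)|$, and hence $\deg(x)$, is a well-defined finite number.
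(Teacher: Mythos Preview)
Your argument is correct: the translation bijection $y \mapsto x+y$ identifies $N(x)$ with $\textnormal{Id}(R)\setminus\{2x\}$ (or with all of $\textnormal{Id}(R)$ when $2x$ is not idempotent), and the degree count follows immediately. Note, however, that the paper does not supply its own proof of this proposition; it is quoted in the preliminaries as \cite[Proposition~2.2]{razaghi2021graph}, so there is no in-paper argument to compare against.
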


 \begin{theorem}{\cite[Theorem 3.1]{razaghi2021graph}}\label{idempotent_connected}
  Let $R$ be a ring. Then $G_{\textnormal{Id} }(R)$is connected if and only if $(R,+)$ = $\langle \textnormal{Id}(R) \rangle$ (R is additively generated by its idempotents).   
 \end{theorem}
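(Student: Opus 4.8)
\emph{Proof proposal.} The plan is to identify the connected component of the vertex $0$ in $G_{\textnormal{Id}}(R)$ with the additive subgroup $\langle \textnormal{Id}(R) \rangle$ generated by the idempotents. The theorem follows from this at once, since a nonempty graph is connected precisely when the component of one of its vertices is the whole vertex set, and here that set is $R$.

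First I would prove the inclusion ``component of $0$ $\subseteq \langle \textnormal{Id}(R) \rangle$''. Take an arbitrary walk $0 = x_0 \sim x_1 \sim \cdots \sim x_k = x$ and put $e_i := x_{i-1} + x_i \in \textnormal{Id}(R)$. Then $x_i = e_i - x_{i-1}$, so unrolling the recursion gives $x = x_k = \sum_{i=1}^{k} (-1)^{k-i} e_i$, a signed sum of idempotents; hence $x \in \langle \textnormal{Id}(R) \rangle$.

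For the reverse inclusion, since every element of $\langle \textnormal{Id}(R) \rangle$ is a finite signed sum of idempotents, it suffices to establish a closure property: if $a$ lies in the component of $0$ and $e \in \textnormal{Id}(R)$, then so do $a + e$ and $a - e$. Here I would use the length-two walks $a \sim -a \sim a + e$ (the consecutive sums being $0$ and $e$) and $a \sim e - a \sim a - e$ (the consecutive sums being $e$ and $0$); an induction on the number of idempotent summands then places all of $\langle \textnormal{Id}(R) \rangle$ in the component of $0$. Combining the two inclusions yields the desired equality, and hence $G_{\textnormal{Id}}(R)$ is connected if and only if $\langle \textnormal{Id}(R) \rangle = R$, i.e. $(R,+) = \langle \textnormal{Id}(R) \rangle$. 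The main obstacle I anticipate is purely bookkeeping: the length-two walks above degenerate (a repeated, ``looping'' vertex) in boundary cases such as $2a = 0$, $2a = e$, or $2(a-e) = 0$, and in each such case I would check directly that the target is still reached, typically by a shorter walk — for example, if $2a = 0$ then $a+e = a-e$ and $a \sim a+e$ already, since $a + (a+e) = 2a + e = e$.
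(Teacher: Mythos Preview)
The paper does not supply its own proof of this statement: it is quoted verbatim as \cite[Theorem~3.1]{razaghi2021graph} in the preliminaries section and used as a black box thereafter. So there is no in-paper argument to compare your proposal against.

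On its own merits your argument is correct. Identifying the connected component of $0$ with the additive subgroup $\langle \textnormal{Id}(R)\rangle$ is exactly the right move, and both inclusions go through as you describe. The telescoping computation $x_k=\sum_{i=1}^{k}(-1)^{k-i}e_i$ handles one direction; the closure step ``$a$ reachable and $e\in\textnormal{Id}(R)$ imply $a\pm e$ reachable'' via the two-step walks $a\sim -a\sim a+e$ and $a\sim e-a\sim a-e$ handles the other. Your anticipated degeneracies are the only places that need care in a \emph{simple} graph (no loops), and each collapses to a shorter walk just as you indicate: if $2a=0$ then $a\sim a+e$ directly; if $2a=-e$ then $a+e=-a$ and $a\sim -a$; if $2a=e$ then $a\sim a-e$ directly; if $2(a-e)=0$ then $a-e=e-a$ and $a\sim e-a$; and the case $e=0$ is trivial. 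Nothing further is needed.
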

 
\begin{corollary}{\cite[Corollary 3.3]{razaghi2021graph}}\label{Idempotent_path}
Let $R$ be a ring. Then $(R,+) =\  \langle \textnormal{Id}(R) \rangle$ and $R$ has no nontrivial idempotents if and only if idempotent graph $G_{\textnormal{Id}}(R)$ is a path graph.
\end{corollary}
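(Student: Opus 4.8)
The plan is to deduce everything from Proposition~\ref{idempotent_degree} and Theorem~\ref{idempotent_connected}, together with the elementary fact that a finite connected graph in which every vertex has degree at most $2$ is either a path or a cycle.

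First I would treat the forward implication. Assume $(R,+)=\langle\mathrm{Id}(R)\rangle$ and $R$ has no nontrivial idempotent, so that $\mathrm{Id}(R)=\{0,1\}$ and $|\mathrm{Id}(R)|=2$ (the zero ring being the trivial sub-case $G_{\mathrm{Id}}(R)=P_1$). By Theorem~\ref{idempotent_connected}, $G_{\mathrm{Id}}(R)$ is connected, and by Proposition~\ref{idempotent_degree} every vertex $x$ satisfies $\deg(x)=2$ when $2x\notin\mathrm{Id}(R)$ and $\deg(x)=1$ when $2x\in\mathrm{Id}(R)$. Thus $G_{\mathrm{Id}}(R)$ is a connected graph of maximum degree at most $2$, hence a path or a cycle. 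Since $2\cdot 0=0\in\mathrm{Id}(R)$, the vertex $0$ has degree $1$, so $G_{\mathrm{Id}}(R)$ is not $2$-regular and therefore not a cycle; hence it is a path.

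For the converse, suppose $G_{\mathrm{Id}}(R)$ is a path. A path is connected, so Theorem~\ref{idempotent_connected} yields $(R,+)=\langle\mathrm{Id}(R)\rangle$. A path also contains a vertex of degree at most $1$, and by Proposition~\ref{idempotent_degree} the degree of any vertex equals $|\mathrm{Id}(R)|$ or $|\mathrm{Id}(R)|-1$; hence $|\mathrm{Id}(R)|\le 2$, which forces $\mathrm{Id}(R)=\{0,1\}$, i.e.\ $R$ has no nontrivial idempotent.

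The argument is essentially immediate once Proposition~\ref{idempotent_degree} is in hand: it already pins $G_{\mathrm{Id}}(R)$ down to a path or a cycle, and the vertex $0$ always breaks the tie in favour of a path. The only thing requiring a little care is the degenerate cases — the zero ring (where the statement holds vacuously) and fixing the convention that "path graph" includes $P_1$ and $P_2$, so that the step "a path has a vertex of degree $\le 1$" is always legitimate. I do not anticipate any genuine obstacle.
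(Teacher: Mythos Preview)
Your argument is correct. Note, however, that in the present paper this corollary is not proved at all: it is quoted verbatim from \cite[Corollary~3.3]{razaghi2021graph} and used only as background. There is therefore no ``paper's own proof'' to compare against; your derivation from Proposition~\ref{idempotent_degree} and Theorem~\ref{idempotent_connected} is exactly the kind of short justification one would expect, and the observation that the vertex $0$ has degree $|\mathrm{Id}(R)|-1$ cleanly rules out the cycle case.
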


\begin{theorem}{\cite[Theorem 3.7]{razaghi2021graph}}\label{idempotent_subringpath}
Let $R$ be a ring with no nontrivial idempotents. If $G_{\textnormal{Id}}(R)$ is disconnected, then there is a subring $S$ of $R$ such that $G_{\textnormal{Id}}(S)$ is a path graph. Set
$A = \{x + S| \ x \in R \setminus S \ \text{and}\  2x = 0\}$ and $B = \{x + S | \ x \in R \ \text{and} \ 2x \neq 0 \}$. Then
$G_{\textnormal{Id}}(R)$ is disjoint union of $|A| + 1$ times $G_{\textnormal{Id}}(S)$ and $|B|/2$ times bipartite subgraph of $G_{\textnormal{Id}}(R)$. In particular, if $2x \notin \textnormal{Id}(R)$, then $G_{\textnormal{Id}}(R)$ is disjoint union of $|A| + 1$ times
$G_{\textnormal{Id}}(S)$ and $|B|/2$ times 2-regular bipartite subgraph of $G_{\textnormal{Id}}(R)$.
\end{theorem}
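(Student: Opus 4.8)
The plan is to build everything on the observation that, since $R$ has no nontrivial idempotents, $\textnormal{Id}(R)=\{0,1\}$; hence in $G_{\textnormal{Id}}(R)$ a vertex $x$ is adjacent exactly to $-x$ and to $1-x$, and by Proposition \ref{idempotent_degree} its degree is $|\textnormal{Id}(R)|$ or $|\textnormal{Id}(R)|-1$, i.e. at most $2$. Thus $G_{\textnormal{Id}}(R)$ is a disjoint union of paths and cycles, and it remains only to identify the components. First I would take $S=\{n\cdot 1 : n\in\mathbb{Z}\}$, the prime subring of $R$: it is a subring, it inherits from $R$ the absence of nontrivial idempotents, so $\textnormal{Id}(S)=\{0,1\}$ and $(S,+)=\langle \textnormal{Id}(S)\rangle$, whence Corollary \ref{Idempotent_path} makes $G_{\textnormal{Id}}(S)$ a path. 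Since $G_{\textnormal{Id}}(R)$ is disconnected, Theorem \ref{idempotent_connected} gives $(R,+)\neq (S,+)$, so $S$ is proper.

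Next I would analyse $G_{\textnormal{Id}}(R)$ along the cosets of $S$ in $(R,+)$. For $x,y\in R$ and $s_1,s_2\in S$, the sum $(x+s_1)+(y+s_2)$ lies in $\textnormal{Id}(R)$ if and only if $x+y\in S$ (because $\{0,1\}\subseteq S$ and $s_1+s_2\in S$); consequently every edge of $G_{\textnormal{Id}}(R)$ joins vertices lying in a pair of cosets of the form $\{C,-C\}$, and the cosets of $S$ organise into such pairs. If $C=x+S$ satisfies $2x=0$, then $(x+s_1)+(x+s_2)=s_1+s_2$, so translation by $x$ is a graph isomorphism from $G_{\textnormal{Id}}(S)$ onto the subgraph induced on $C$; moreover $C=-C$, so $C$ is a full connected component isomorphic to $G_{\textnormal{Id}}(S)$. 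Counting $S$ itself together with the cosets listed in $A$, this produces the $|A|+1$ copies of $G_{\textnormal{Id}}(S)$.

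For a pair $\{C,-C\}$ with $C\neq -C$ (the cosets contributing to $B$, two per pair), an edge inside $C$ would force $2x\in S$ and hence $C=-C$; so $C$ and $-C$ are independent sets and the subgraph induced on $C\cup(-C)$ is bipartite with parts $C$ and $-C$. For the final assertion, if in addition $2x\notin\textnormal{Id}(R)$ then Proposition \ref{idempotent_degree} makes every vertex of this piece have degree exactly $2$, with the two neighbours $-x$ and $1-x$ distinct and both in the opposite part; hence it is $2$-regular bipartite. A concluding count --- $|A|+1$ pieces of size $|S|$ and $|B|/2$ pieces of size $2|S|$, summing to $|R|$ --- confirms that no vertex is left over.

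I expect the main difficulty to be the bookkeeping that underlies the partition: making the descriptions of $A$ and $B$ precise enough that each coset is placed in exactly one class, and in particular ruling in or out the self-paired cosets $C=-C$ with $2x\neq 0$ (which can appear only when $R$ has an element $x$ with $2x=1$, and which are precisely what can spoil $2$-regularity of a bipartite piece). Once $\textnormal{Id}(R)=\{0,1\}$ is exploited, the graph-theoretic ingredients --- bounded degree, the coset pairing, and the translation isomorphism --- are routine.
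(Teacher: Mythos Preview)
The paper does not supply a proof of this statement: it is quoted in the Preliminaries section from \cite[Theorem~3.7]{razaghi2021graph} and used as a black box thereafter, so there is no in-paper argument to compare your proposal against.

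That said, your reconstruction is correct and is the natural proof. Once $\textnormal{Id}(R)=\{0,1\}$, the prime subring is exactly $\langle\textnormal{Id}(R)\rangle$, so Corollary~\ref{Idempotent_path} and Theorem~\ref{idempotent_connected} give the path component $G_{\textnormal{Id}}(S)$ and the properness of $S$ just as you describe. The coset pairing $\{C,-C\}$ controlling all edges, the translation isomorphism $s\mapsto x+s$ identifying the $2x=0$ cosets with copies of $G_{\textnormal{Id}}(S)$, and the bipartiteness of the remaining pieces are all straightforward once set up. You have also put your finger on the one genuinely delicate issue, namely that the conditions ``$2x=0$'' and ``$2x\neq 0$'' in the definitions of $A$ and $B$ are representative-dependent, and that self-paired cosets with $2x\in S\setminus\{0\}$ (possible when some element satisfies $2x=1$) land in $B$ yet carry vertices of degree~$1$; this is precisely why the $2$-regularity conclusion requires the extra hypothesis $2x\notin\textnormal{Id}(R)$. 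Your awareness of this boundary case is the main thing a careful write-up would need to track.
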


%%%%%%%%%%%%%%%%%%%%%%%%%%%%%%%%%%%%%%%%%%%%%%%%%%%%%%%%%%%%%%%%%%%%%%%%%%%%%%%%%%%%%%%%%%%%%%%%%%%
\section{Planarity of $G_{\textnormal{Id}}(R)$}

In this section, we investigate the embedding of $G_{\textnormal{Id}}(R)$ on a plane without edge crossings. We prove that for a non-local commutative ring $R$ the graph $G_{\textnormal{Id}}(R)$ cannot be outerplanar (see Theorem \ref{outerplanarity}). Also, we classify all non-local commutative rings $R$ such that the graph $G_{\textnormal{Id}}(R)$ is planar (see Theorem \ref{planarity}).

\begin{theorem}\label{outerplanarity}
 For a non-local commutative ring $R$, the graph $G_{\textnormal{Id}}(R)$ cannot be outerplanar.   
\end{theorem}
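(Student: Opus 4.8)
The plan is to exhibit, inside $G_{\textnormal{Id}}(R)$, a subdivision of $K_4$ or of $K_{2,3}$ for every non-local commutative ring $R$, and then invoke Theorem \ref{outerplanar criteria}. Since $R$ is non-local, the structure theorem gives $R \cong R_1 \times \cdots \times R_n$ with $n \ge 2$, so $R$ has at least the four idempotents $(0,\ldots,0)$, $(1,0,\ldots,0)$, $(0,1,0\ldots,0)$ and $(1,1,0,\ldots,0)$; call them $e_0, e_1, e_2, e_3$. The first thing I would do is understand the neighbourhood of the idempotents themselves: by Proposition \ref{idempotent_degree}, every idempotent $e$ satisfies $2e \in \textnormal{Id}(R)$? — not in general, so instead I would directly note that for idempotents $e, f$ the sum $e+f$ is idempotent precisely when $2ef = 0$, and in particular $e + (1-e) = 1$ is always idempotent, so every $x$ is adjacent to $1-x$ whenever... more usefully, $0$ is adjacent to every idempotent, and each $e_i$ is adjacent to $e_0 = 0$ and to $1 - e_i$, which is again one of the four listed idempotents. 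So among $\{e_0,e_1,e_2,e_3\}$ we already get several edges.

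The cleanest route, I expect, is to work in a copy of $\mathbb{Z}_2 \times \mathbb{Z}_2$ sitting inside the first two coordinates. Consider the eight elements $x = (a,b,0,\ldots,0)$ with $a \in \{0,1\} \subseteq R_1$ (reductions of $0,1$) and similarly $b$, together with — if $|R_1| > 2$ or $|R_2| > 2$ or $\mathrm{char}\ R_i \ne 2$ — a few extra carefully chosen vertices. In the pure $\mathbb{Z}_2\times\mathbb{Z}_2$ case the four vertices are exactly the four idempotents, $x+y$ is idempotent for *all* pairs (since the sum of any two elements of $\mathbb{Z}_2\times\mathbb{Z}_2$ is idempotent), so $G_{\textnormal{Id}}(\mathbb{Z}_2\times\mathbb{Z}_2) = K_4$, which is already not outerplanar. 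For larger rings I would pick, in each slot where the local ring is bigger than $\mathbb{Z}_2$, an extra element $t_i$ of $R_i$ and build vertices like $(1, 0, \ldots)$, $(0,1,\ldots)$, $(1,1,\ldots)$, $(0,0,\ldots)$ plus $(t,\ast,\ldots)$-type vertices; the adjacency $x \sim y \iff x+y \in \textnormal{Id}(R) \iff$ each coordinate sum is idempotent in $R_i$, so the problem factors coordinatewise and I can reason one local ring at a time.

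The key steps, in order: (1) reduce to $R \cong R_1 \times \cdots \times R_n$ and observe adjacency is coordinatewise; (2) handle the base case $R_1 = R_2 = \mathbb{Z}_2$ and $n = 2$, where $G_{\textnormal{Id}}(R) \supseteq K_4$ outright, and the case $n \ge 3$ with all $R_i = \mathbb{Z}_2$, where one gets a $2$-regular-plus structure on the Boolean cube containing $K_4$ as a minor hence a $K_4$-subdivision; (3) handle the case where some $R_i \ne \mathbb{Z}_2$ — here I would combine the two idempotents $0,1$ of that slot with a non-idempotent element $t$ (which exists since $R_i$ local non-field still has a non-idempotent, and even a field $\mathbb{F}_q$ with $q>2$ has non-idempotents) to produce extra vertices and locate a $K_{2,3}$-subdivision with parts $\{0, (1,1,\ldots)\}$ and $\{(1,0,\ldots),(0,1,\ldots),(t,\ast,\ldots)\}$ or similar, checking the six required adjacencies coordinatewise. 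The main obstacle I anticipate is step (3): getting a uniform choice of the auxiliary vertices that works simultaneously for a field slot, a non-reduced local slot, and in the presence of possibly many other coordinates — I would likely organize this by first projecting to two coordinates $R_i \times R_j$ (or $R_i$ alone, padded with $0$'s elsewhere), proving the statement there, and noting that an induced (indeed, any) subgraph-subdivision in a projection lifts to $G_{\textnormal{Id}}(R)$ since padding with a fixed idempotent tuple preserves all adjacencies.
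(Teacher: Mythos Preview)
Your organizing idea --- pad with zeros in the extra coordinates to embed $G_{\textnormal{Id}}(R_1 \times R_2)$ as an induced subgraph of $G_{\textnormal{Id}}(R)$, thereby reducing everything to $n = 2$ --- is correct and is genuinely cleaner than the paper's route. The paper treats $n \ge 3$ by a separate case analysis on the characteristics of the $R_i$ before turning to $n=2$; your projection makes that whole discussion redundant once the two-factor case is settled. (Your step~(2), the ``$n \ge 3$, all $R_i = \mathbb{Z}_2$'' case, is likewise subsumed: projecting to two coordinates already gives $G_{\textnormal{Id}}(\mathbb{Z}_2\times\mathbb{Z}_2)=K_4$; in fact in that case $G_{\textnormal{Id}}(R)=K_{2^n}$ outright.)

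The gap is step~(3), and it is where essentially all of the work lives. Your candidate $K_{2,3}$ with parts $\{0,(1,1,\dots)\}$ and $\{(1,0,\dots),(0,1,\dots),(t,\ast,\dots)\}$ fails on two counts: the edge $0\sim(t,\ast)$ would force $t\in\textnormal{Id}(R_1)$, contrary to the choice of $t$; and the edge $(1,1)\sim(1,0)$ would force $2\in\textnormal{Id}(R_1)$, i.e.\ $char(R_1)=2$. You flag this as the main obstacle, and rightly so, but you should not expect a single uniform construction to work. The paper's $n=2$ argument splits according to whether each $char(R_i)$ is $2$, is $3$, or is $\ge 4$, uses Theorem~\ref{idempotent_subringpath} to extract suitable paths or cycles inside each $G_{\textnormal{Id}}(R_i)$, and then assembles --- sometimes via an edge contraction, which is legitimate since a $K_4$-minor forces a $K_4$-subdivision --- a $K_4$ or $K_{2,3}$ in the product. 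So your plan is sound, and its reduction step improves on the paper, but completing it will still require a characteristic-by-characteristic case analysis of essentially the same shape as the paper's $n=2$ argument.
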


  \begin{proof}
Let $R$ be a non-local commutative ring. Then $R \cong R_1 \times R_2 \times  \cdots  \times R_n (n \ge 2)$, where each $R_i$ is a local ring. Suppose $G_{\textnormal{Id}}(R)$ is an outerplanar graph. First suppose that $n \geq 3$ and for $i = 1, 2$, assume that $char(R_i) \notin \{2, 3\}$. Then there exist $a_1 \in R_1 \setminus \text{Id}(R_1)$, $a_2 \in R_2 \setminus \text{Id}(R_2)$ such that $2a_1 \notin R_1$ and $2a_2 \notin R_2$. Consider the sets $X =  \{ (a_1, a_2, 1, 0, \ldots, 0), (a_1, a_2, 0, \ldots, 0) \}$ and  $ Y = \{ (-a_1, -a_2, 0, \ldots, 0), (-a_1, -a_2, 1, 0, \ldots, 0), (1-a_1, -a_1, 0, \ldots, 0) \}$. Note that the subgraph $G_{\textnormal{Id}}(R)$ induced by the set $X \cup Y$ is isomorphic to $K_{2, 3}$, a contradiction. Next, assume that the characteristic of one of the local ring $R_i$ is two. Without loss of generality, assume that $char(R_1) = 2$. It follows that the set $\{(0, 0, \ldots, 0), (1, 0, \ldots, 0), (1, 1, \ldots, 0), (0, 0, 1, 0, \ldots, 0)\}$ induces a subgraph isomorphic to $K_4$, which is not possible. If $char(R_1) = 3$, then by Theorem \ref{idempotent_subringpath}, there exists an induced path $P: a_1 \sim a_2 \sim a_3$ in $G_{\textnormal{Id}}(R_1)$. Note that $2a_1, 2a_3 \in \textnormal{Id}(R_1)$. It follows that the set $\{(a_1, 0, 0, \ldots, 0), (a_1, 1, 0, \ldots, 0), (a_2, 0, \ldots, 0), (a_1, 0, 1, 0, \ldots, 0)\}$ induces $K_4$ as a subgraph of $G_{\textnormal{Id}}(R)$, a contradiction. If $char(R_1) = 2$, then note that the set $X = \{(1, 0, 0, \ldots, 0)$, $(0, 0, \ldots, 0)$, $(1, 1, 0, \ldots, 0)$, $(1, 0, 1, 0, \ldots, 0)\}$ induces a subgraph isomorphic to $K_4$, which is not possible. It follows that $n = 2$. Thus, $R \cong R_1 \times R_2$. 

First suppose that $char(R_i) \geq 4$ for each $i \in \{ 1,2\}$. By Theorem \ref{idempotent_subringpath}, there exists an induced path $a_1 \sim a_2 \sim \sim a_3 \sim a_4$ in $G_{\textnormal{Id}}(R_1)$ and $b_1 \sim b_2 \sim b_3 \sim b_4$ in $G_{\textnormal{Id}}(R_2)$. It follows that the subgraph induced by the set $\{(a_1, b_1), (a_1, b_3), (a_2, b_2), (a_1, b_2), [(a_2, b_1),(a_1,b_3), (a_2,b_4)]\}$ is isomorphic to $K_{2,3}$, which is not possible. Consequently, either $char(R_1) \leq 3$ or $char(R_2) \leq 3$. Without loss of generality, assume that $char(R_1) = 3$ and $char(R_2) \geq 4$. By Theorem \ref{idempotent_subringpath}, there exists an induced path $x_1 \sim x_2 \sim x_3$ in $G_{\textnormal{Id}}(R_1)$ and $y_1 \sim y_2 \sim y_3 \sim y_4$ in $G_{\textnormal{Id}}(R_2)$. Then the set $X =\{(x_1, y_1), (x_1, y_3), (x_2, y_2), (x_1, y_2), [(x_2, y_1),(x_1,y_3), (x_2,y_4)]\}$ induces $K_{2,3}$ as a subgraph of $G_{\textnormal{Id}}(R)$, a contradiction. Similarly, if $char(R_1)= char(R_2) = 3$, then we get $K_{2,3}$ as an induced subgraph of $G_{\textnormal{Id}}(R)$, again a contradiction. Let $char(R_i) = 2$ for some $i$. If $char(R_1) = 2$ and $char(R_2) =k(k\ge2)$, there exists an induced path $b_1 \sim b_2 \sim \cdots \sim b_k$ in $G_{\textnormal{Id}}(R_2)$. Then the set $Y = \{(0, b_1), (1, b_1), (1, b_2), [(0, b_2),(0,b_3)]\}$ induces $K_4$ as a subgraph of $G_{\textnormal{Id}}(R)$. This completes our proof.
  \end{proof}  

In view of the proof of the Theorem \ref{outerplanarity}, we have the following proposition.

  \begin{proposition}\label{cactusgraph}
  For a non-local commutative ring $R$,
    \begin{enumerate}
 \item[{\rm(i)}] $G_{\textnormal{Id}}(R)$ cannot be a cactus graph.
 \item[{\rm(ii)}] $G_{\textnormal{Id}}(R)$ cannot be a unicyclic graph.
 \end{enumerate}
 \end{proposition}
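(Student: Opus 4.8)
The plan is to show that $G_{\textnormal{Id}}(R)$ is too dense to be a cactus, and then to obtain (ii) as a formal consequence of (i). The numerical input is this: writing $R\cong R_1\times\cdots\times R_n$ with $n\ge 2$ and each $R_i$ local, each $R_i$ has only the trivial idempotents $0$ and $1$, so $|\textnormal{Id}(R)|=\prod_{i=1}^{n}|\textnormal{Id}(R_i)|=2^{n}\ge 4$. By Proposition \ref{idempotent_degree}, every vertex $x$ of $G_{\textnormal{Id}}(R)$ has $\deg(x)\ge |\textnormal{Id}(R)|-1\ge 3$, so the minimum degree $\delta$ of $G_{\textnormal{Id}}(R)$ is at least $3$; moreover $|R|=\prod_i|R_i|\ge 2^{n}\ge 4$.

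To prove (i), note first that if $G_{\textnormal{Id}}(R)$ is disconnected it is not a cactus, since by definition a cactus is connected; so assume it is connected, with $v$ vertices and $e$ edges. The distinct cycles of a connected cactus are pairwise edge-disjoint and number exactly $e-v+1$, and each has length at least $3$, so $3(e-v+1)\le e$, i.e. $e\le \frac{3(v-1)}{2}$. On the other hand, the handshake identity together with $\delta\ge 3$ gives $2e=\sum_x\deg(x)\ge 3v$, i.e. $e\ge \frac{3v}{2}>\frac{3(v-1)}{2}$ — a contradiction. Hence $G_{\textnormal{Id}}(R)$ is not a cactus.

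For (ii), observe that any unicyclic graph is connected and has a unique cycle, so the defining condition for a cactus holds vacuously; thus a unicyclic graph is in particular a cactus, and (ii) follows from (i). (Directly: a connected unicyclic graph satisfies $e=v$, which again contradicts $2e\ge 3v$.) I do not expect a genuine obstacle here; the only point to keep in mind is that the counting estimates need $G_{\textnormal{Id}}(R)$ to be nontrivial, which is guaranteed by $|R|\ge 4$ and $\delta\ge 3$. Alternatively — and in the spirit of the remark preceding the statement — one may reuse the proof of Theorem \ref{outerplanarity}: that proof exhibits, in every case, a copy of $K_4$ or of $K_{2,3}$ as a subgraph or a minor of $G_{\textnormal{Id}}(R)$, and since neither of these graphs (nor anything having one of them as a minor) is a cactus or unicyclic, both parts follow again; the degree/edge-count argument above is merely the shorter, uniform route.
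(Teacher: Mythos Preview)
Your argument is correct and is a genuinely different route from the paper's. The paper does not give a separate proof of Proposition~\ref{cactusgraph}; it simply observes that the proof of Theorem~\ref{outerplanarity} already produces, in every case, a subgraph of $G_{\textnormal{Id}}(R)$ that is (homeomorphic to) $K_4$ or $K_{2,3}$, and neither of these can sit inside a cactus or a unicyclic graph. Your main argument instead extracts a uniform numerical obstruction: from Proposition~\ref{idempotent_degree} and $|\textnormal{Id}(R)|=2^{n}\ge 4$ you get $\delta(G_{\textnormal{Id}}(R))\ge 3$, hence $e\ge \tfrac{3v}{2}$, while any connected cactus satisfies $e\le \tfrac{3(v-1)}{2}$; the unicyclic case then drops out either as a special cactus or via $e=v$. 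This is shorter and avoids the case analysis of Theorem~\ref{outerplanarity}; the paper's approach, on the other hand, recycles work already done and simultaneously pins down an explicit forbidden substructure, which is a bit more informative structurally. Your closing paragraph already notes this alternative and is in line with how the paper actually argues.
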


  \begin{theorem}\label{planarity}
 Let $R \cong R_1 \times R_2 \times  \cdots  \times R_n (n \ge 2)$ be a non-local commutative ring. Then $G_{\textnormal{Id}}(R)$ is a planar graph if and only if $R \cong R_1 \times R_2$ and one of the following holds:
 \begin{enumerate}
 \item[{\rm(i)}] $(R_1, +) = \langle \textnormal{Id}(R_1) \rangle$ and $(R_2, +) = \langle \textnormal{Id}(R_2) \rangle$.
 \item[{\rm(ii)}]  $(R_1, +) = \langle \textnormal{Id}(R_1) \rangle$ and $char(R_2)= 2$.
 \item[{\rm(iii)}] $char(R_1)= 2$ and  $char(R_2)= 2$.
 \end{enumerate}
\end{theorem}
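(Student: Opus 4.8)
The proof splits into necessity and sufficiency, and both directions begin by pinning down $n$. Each $R_i$ is a finite local ring, so it has only the trivial idempotents $0,1$; hence $|\textnormal{Id}(R)|=2^{n}$, and by Proposition~\ref{idempotent_degree} every vertex of $G_{\textnormal{Id}}(R)$ has degree $2^{n}-1$ or $2^{n}$. A simple planar graph on $v\ge 3$ vertices has at most $3v-6$ edges and hence a vertex of degree at most $5$, so planarity of $G_{\textnormal{Id}}(R)$ forces $2^{n}-1\le 5$, i.e. $n=2$ and $|\textnormal{Id}(R)|=4$. From here on $R\cong R_{1}\times R_{2}$ with $\textnormal{Id}(R)=\{(0,0),(1,0),(0,1),(1,1)\}$, and the adjacency rule becomes: $(x_{1},x_{2})\sim(y_{1},y_{2})$ iff $x_{1}+y_{1}\in\textnormal{Id}(R_{1})$ and $x_{2}+y_{2}\in\textnormal{Id}(R_{2})$. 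I will also use the elementary structural facts that for a local ring $S$ the only possible neighbours of $x$ in $G_{\textnormal{Id}}(S)$ are $-x$ and $1-x$, so $G_{\textnormal{Id}}(S)$ has maximum degree $2$ and is a disjoint union of paths and cycles; that by Corollary~\ref{Idempotent_path} it is a single path exactly when $(S,+)=\langle\textnormal{Id}(S)\rangle$; and that a vertex $x$ is an endpoint of a path iff $2x\in\textnormal{Id}(S)$.

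For sufficiency I would identify $G_{\textnormal{Id}}(R)$ up to isomorphism in each clause. In case~(iii), $\textnormal{char}(R)=2$, so $\textnormal{Id}(R)$ is an additive subgroup of $(R,+)$ of order $4$ and $G_{\textnormal{Id}}(R)$ is the Cayley graph of $(R,+)$ with connection set $\textnormal{Id}(R)\setminus\{0\}$, which is a disjoint union of $|R|/4$ copies of $K_{4}$ and is planar. In case~(ii), $G_{\textnormal{Id}}(R_{1})$ is a path while $\textnormal{char}(R_{2})=2$ makes $G_{\textnormal{Id}}(R_{2})$ the perfect matching $\{y,y+1\}$ with every vertex an endpoint; unwinding the coordinatewise adjacency shows $G_{\textnormal{Id}}(R)$ is a disjoint union of $|R_{2}|/2$ copies of a ``chain of quadrilaterals with one chord added at each end'', a chain of $2$-sums of planar pieces, hence planar. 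In case~(i) both $G_{\textnormal{Id}}(R_{i})$ are paths; indexing the vertices of $R_{i}$ by position along the path, one checks $G_{\textnormal{Id}}(R)$ is isomorphic to the $|R_{1}|\times|R_{2}|$ grid in which every cell carries its two diagonals and, in addition, each of the four boundary lines carries its straight edges. The boundary straight edges form a cycle $\partial$, the diagonal edges split by the parity of (row$+$column) into two ``diamond'' subgraphs that meet $\partial$ only at vertices and do so in alternating circular order, so drawing $\partial$ as a circle and placing one diamond inside and the other outside gives a planar embedding.

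For necessity, assume $G_{\textnormal{Id}}(R)$ is planar, so $R\cong R_{1}\times R_{2}$ as above, and suppose none of (i)--(iii) holds; interchanging factors, we may take $R_{1}$ neither additively generated by its idempotents nor of characteristic $2$. The aim is a subdivision of $K_{5}$ or $K_{3,3}$ in $G_{\textnormal{Id}}(R)$, and the cleanest device is: if $G_{\textnormal{Id}}(R_{1})$ contains an even cycle $a_{1}\sim\cdots\sim a_{\ell}\sim a_{1}$, then the subgraph of $G_{\textnormal{Id}}(R)$ induced on $\{a_{1},\dots,a_{\ell}\}\times R_{2}$ is bipartite (colour $(a_{i},y)$ by $i\bmod 2$) and $4$-regular --- each $a_{i}$ on the cycle has exactly the two neighbours $a_{i\pm1}$, and for each $y\in R_{2}$ there are exactly the two values $-y,\,1-y$ with $y+y'\in\textnormal{Id}(R_{2})$ --- so it has $2\ell|R_{2}|$ edges on $\ell|R_{2}|$ vertices, violating the bound $|E|\le 2|V|-4$ for bipartite planar graphs; hence $G_{\textnormal{Id}}(R)$ is not planar. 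Since $R_{1}$ is not a path-ring, $G_{\textnormal{Id}}(R_{1})$ is disconnected by Theorem~\ref{idempotent_connected}; when $\textnormal{char}(R_{1})$ is an odd prime power the prime subring of $R_{1}$ already contributes a path component (it induces $G_{\textnormal{Id}}(\mathbb{Z}_{\textnormal{char}(R_1)})$, a path), so a further component exists, it must be a cycle, and all cycles are even here because an odd cycle of $G_{\textnormal{Id}}(R_1)$ would be forced into the prime subring --- so the device applies. The remaining possibility, $\textnormal{char}(R_{1})$ a power of $2$ (at least $4$) with $G_{\textnormal{Id}}(R_{1})$ carrying no cycle one can use, is the crux: here one must start instead from a path component of $G_{\textnormal{Id}}(R_{1})$ (the prime subring again gives one with at least $4$ vertices) and bring in the detailed shape of $G_{\textnormal{Id}}(R_{2})$, splitting on whether $R_{2}$ is a path-ring or contains a cycle and on $\textnormal{char}(R_{2})$, and in each subcase either push the edge-count argument through on a larger bipartite induced subgraph or exhibit the $K_{5}$/$K_{3,3}$-subdivision directly, refining the case analysis of Theorem~\ref{outerplanarity}. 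That last case analysis is where essentially all of the work lies.
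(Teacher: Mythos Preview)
Your reduction to $n=2$ via the minimum-degree bound $2^{n}-1\le 5$ is markedly cleaner than the paper's route, which constructs explicit $K_{5}$ or $K_{3,3}$ configurations separately for $n\ge 4$ and for $n=3$, the latter itself splitting into several subcases by characteristic. Your sufficiency argument is essentially the paper's (both reduce to drawing the ``grid with all cell-diagonals plus the boundary straight edges'' induced by a product of two path components), and for necessity when $G_{\textnormal{Id}}(R_{1})$ contains an even cycle your bipartite edge-count device is different from and slicker than the paper's method, which instead contracts long subpaths of the product of that cycle with a path in $G_{\textnormal{Id}}(R_{2})$ to exhibit a $K_{5}$ minor.

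The genuine gap is exactly the case you flag but do not carry out: $\textnormal{char}(R_{1})=2^{k}$ with $k\ge 2$. The paper does \emph{not} split this off; it asserts, via Theorem~\ref{idempotent_subringpath}, that whenever $\textnormal{char}(R_{1})\neq 2$ and $(R_{1},+)\neq\langle\textnormal{Id}(R_{1})\rangle$ the graph $G_{\textnormal{Id}}(R_{1})$ already contains an even cycle of length at least $6$, and then runs a single uniform construction. Your instinct to distrust this is sound. Take $R_{1}=\mathbb{Z}_{4}[t]/(t^{2},\,2t)$: this is local with $\textnormal{char}(R_{1})=4$ and $(R_{1},+)\neq\langle\textnormal{Id}(R_{1})\rangle$, yet $G_{\textnormal{Id}}(R_{1})$ is two disjoint copies of $P_{4}$ with no cycle whatsoever. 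So the paper's appeal to Theorem~\ref{idempotent_subringpath} at this step is too quick; moreover $G_{\textnormal{Id}}(R_{1}\times\mathbb{Z}_{2})$ is then two copies of an $8$-vertex planar graph, while $R_{1}\times\mathbb{Z}_{2}$ satisfies none of (i)--(iii). Your promised case analysis therefore could not have succeeded as planned: the statement itself needs adjustment (the honest hypothesis is that each $G_{\textnormal{Id}}(R_{i})$ be a disjoint union of paths, equivalently that every $2$-torsion--free coset of the prime subring be empty), and both your proposal and the paper's proof have a hole at the same spot.
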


  \begin{proof}
 First, suppose that the graph $G_{\textnormal{Id}}(R)$ is planar. We prove the result in the following cases.

 \textbf{Case-1:} $n \ge 4$. Note that the set $X = \{(0, 0, \ldots, 0)$, $(1, 0, \ldots, 0)$, $(0, 1,0, \ldots, 0)$, $(0, 0, 1, 0, \ldots, 0)$, $(0, 0, 0, 1, 0, \ldots, 0) \}$ induces $K_5$ as a subgraph of $G_{\textnormal{Id}}(R)$. Therefore, the graph $G_{\textnormal{Id}}(R)$ is not planar. 

 \textbf{Case-2:} $n = 3$ i.e. $R \cong R_1 \times R_2 \times R_3$. Without loss of generality, assume that $char(R_1) = char(R_2) = 2$. Then there exist $a_1, a_2 \in R_1 $, $b_1, b_2 \in R_2$ such that $a_i + a_j \in \textnormal{Id}(R_1)$ and $b_i + b_j \in \textnormal{Id}(R_2)$ for each $i,j \in \{1,2\}$. Note that the set $\{(a_1, b_1, 0)$, $(a_1, b_2, 0)$, $(a_2, b_1, 0)$, $(a_2, b_2, 0)$, $(a_1, b_1, 1) \}$ induces $K_5$ as an induced subgraph of $G_{\textnormal{Id}}(R)$. Therefore, $char(R_i) \neq 2$ for some $i$. Let $char(R_1) = 2$ but $char(R_2) \neq 2$ and $char(R_3) \neq 2$. Then there exist $a_1, a_2 \in R_1$ such that $a_i + a_j \in \textnormal{Id}(R_1)$ for $i,j \in \{ 1,2\}$. Consider the set $X = \{(a_1, 1, 0)$, $(a_1, 1, 1)$, $(a_2, 1, 1)$, $(a_1, -1, 0)$, $(a_2, -1, 0)$, $(a_1, 0, 0)\}$. Then the subgraph induced by the set $X$ is isomorphic to $K_{3,3}$. Therefore, $char(R_i) \neq 2$ for each $i$. Assume that $char(R_i) \geq 3$ for each $i \in \{ 1,2,3\}$. Let $a_1 \in R_1 \setminus \textnormal{Id}(R_1)$ such that $2a_1 \notin \textnormal{Id}(R_1)$. Then $G_{\textnormal{Id}}(R)$ contains a subgrah  homeomorphic to $K_{3,3}$ (see Figure \ref{planar_1_idempotent}), a contradiction. Thus, the graph $G_{\textnormal{Id}}(R)$ is not planar. 

\begin{figure}[h!]
			\centering
			\includegraphics[width=0.5 \textwidth]{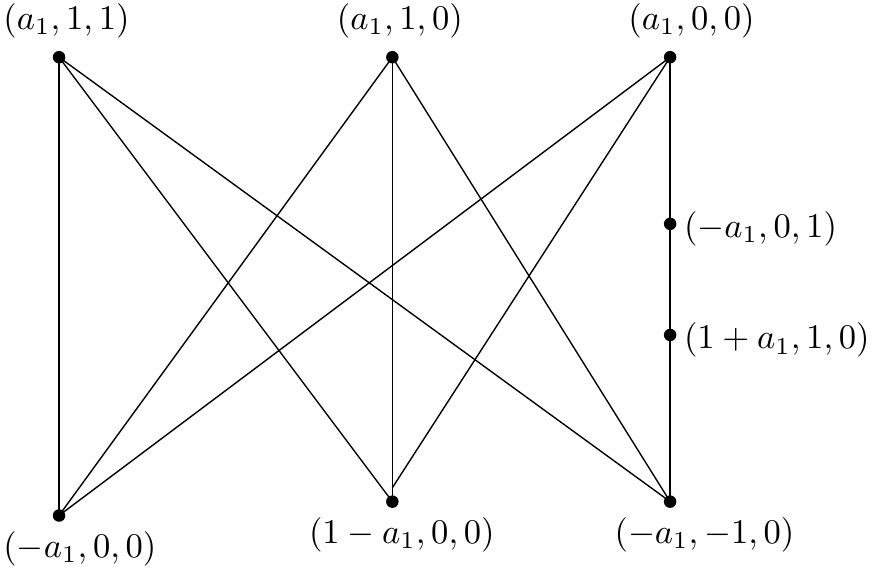}
			\caption{Subdivision of $G_{\textnormal{Id}}(R_1 \times R_2 \times R_3)$ homeomorphic to $K_{3,3}$}
   \label{planar_1_idempotent}
	\end{figure}

\textbf{Case-3:} $n = 2$ i.e. $R \cong R_1 \times R_2$. Suppose one of the local ring $R_i$ does not have characteristic $2$ and $(R_i, +) \neq \langle \textnormal{Id}(R_i) \rangle$. Without loss of generality, let  $(R_1, +) \neq \langle \textnormal{Id}(R_1) \rangle$ and $char(R_1) \neq 2$. Then by Theorem \ref{idempotent_subringpath}, $G_{\textnormal{Id}}(R_1)$ has an induced cycle $C : x_1 \sim x_2 \sim \cdots \sim x_k \sim x_1$$(k \ge 6)$ of even length. Since the local ring $R_2$ has no non-trivial idempotent element, $G_{\textnormal{Id}}(R_2)$ has an induced path $ P: y_1 \sim y_2 \sim \cdots \sim y_r \sim y_1$$(r \ge 2)$ as a subgraph. Consider the set $A = \{ u_1, u_2, u_3, u_4,u_5 \}$. If $r$ is even, then consider
\begin{align*}
u_1 &= [(x_1,y_1),(x_2,y_1), \ldots, (x_{\frac{k}{2}},y_1)], \ \text{the contraction of the edges} \ ((x_1,y_1),(x_2,y_1)), \cdots, ((x_{\frac{k}{2}-1},y_1),(x_{\frac{k}{2}},y_1)) \\
u_2 &= [(x_{\frac{k}{2} +1},y_1),(x_{\frac{k}{2}+2},y_1), \ldots, (x_k,y_1)], \\
u_3 &= [(x_1,y_2), (x_2,y_3), (x_1,y_4),\ldots,(x_1,y_r),(x_2,y_r),(x_3,y_r),\ldots, (x_{\frac{k}{2}},y_r)], \\
u_4 &=[(x_k,y_2), (x_{k-1},y_3), (x_k,y_4),\ldots,(x_k,y_r),(x_{k-1},y_r),(x_{k-2},y_r),\ldots, (x_{\frac{k}{2} +1},y_r)] \ \text{and} \\
u_5 &= [(x_{\frac{k}{2}}, y_{r-1}), (x_{\frac{k}{2}+1}, y_{r-2}),(x_{\frac{k}{2}}, y_{r-3}),\ldots, (x_{\frac{k}{2}}, y_2)].    
\end{align*}
If $r$ is odd, then consider
\begin{align*}
u_1 &= [(x_1,y_1),(x_2,y_1), \ldots, (x_{\frac{k}{2}},y_1)],\\
u_2 &= [(x_{\frac{k}{2} +1},y_1),(x_{\frac{k}{2}+2},y_1), \ldots, (x_k,y_1)], \\
u_3 &= [(x_1,y_2), (x_2,y_3), (x_1,y_4),\ldots,(x_2,y_r),(x_3,y_r),(x_4,y_r),\ldots, (x_{\frac{k}{2}},y_r)], \\
u_4 &= [(x_k,y_2), (x_{k-1},y_3), (x_k,y_4),\ldots,(x_{k-1},y_r),(x_{k-2},y_r),(x_{k-3},y_r),\ldots, (x_{\frac{k}{2} +1},y_r)] \ \text{and} \\
u_5 &= [(x_{\frac{k}{2}}, y_{r-1}), (x_{\frac{k}{2}+1}, y_{r-2}),(x_{\frac{k}{2}}, y_{r-3}),\ldots, (x_{\frac{k}{2}+1}, y_2)].  
\end{align*}
 Then the subgraph (see Figure \ref{K_5idempotent}) induced by the set $A$ is homeomorphic to $K_5$, a contradiction.
 \begin{figure}[h!]
			\centering
			\includegraphics[width=0.5 \textwidth]{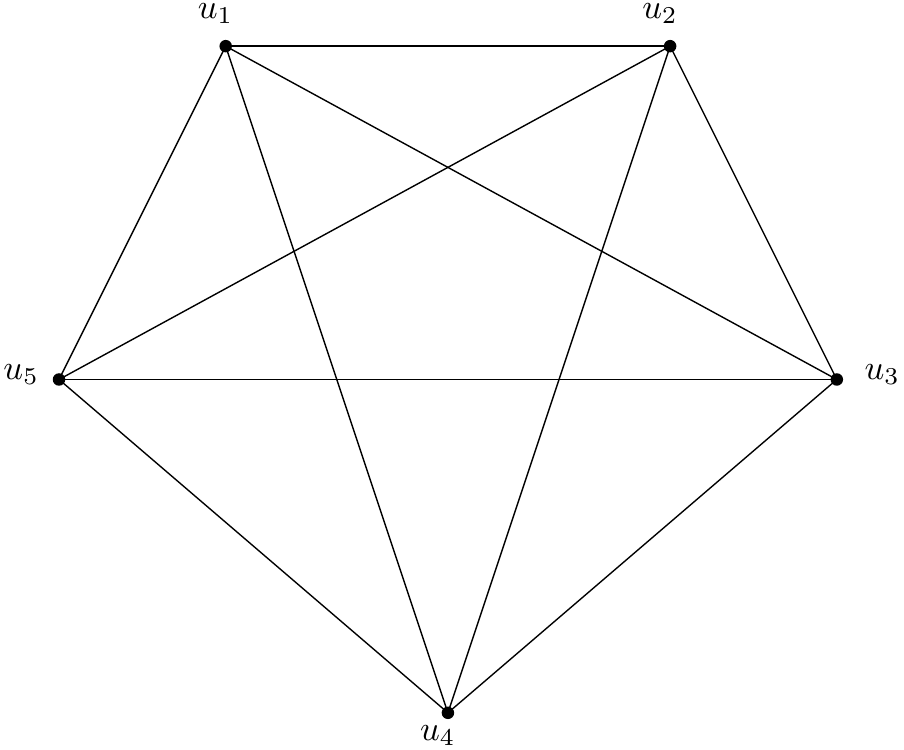}
			\caption{Subgraph of $G_{\textnormal{Id}}(R_1 \times R_2)$ homeomorphic to $K_5$}
   \label{K_5idempotent}
\end{figure}
Thus, for $G_{\textnormal{Id}}(R_1 \times R_2)$ to be planar, $R\cong R_1 \times R_2$ must satisfies one of the conditions given in the statement.

To prove the converse, let $R \cong R_1 \times R_2$ and it satisfies one of the given conditions. Then by Theorem \ref{idempotent_subringpath}, $G_{\textnormal{Id}}(R_1)$ and $G_{\textnormal{Id}}(R_2)$ are the  union of disjoint path graphs. Consider the paths $P_i$ and $P_j$ as a connected components of $G_{\textnormal{Id}}(R_1)$ and $G_{\textnormal{Id}}(R_2)$, respectively. Note that, all the connected components of the graph $G_{\textnormal{Id}}(R_1 \times R_2)$ are induced by the vertex set $V(P_i) \times V(P_j)$. Let $P_1 : u_1 \sim u_2 \sim \cdots \sim u_k  (k \ge 2)$ and $P_2 : v_1 \sim v_2 \sim \cdots \sim v_r  (r \ge 2)$ be two disjoint connected component of $G_{\textnormal{Id}}(R_1)$ and $G_{\textnormal{Id}}(R_2)$, respectively. Consider the subset $X = \{ (u_i, v_i) ; \ 1 \le i \le k, \ 1 \le j \le r \}$ of $R_1 \times R_2$. If both $k$ and $r$ are even, then $G_{\textnormal{Id}}(X)$ is planar (see Figure \ref{planar_idempotent}, where $i,j$ denotes the vertex $(u_i, v_j)$). Similarly, one can obtain a planar drawing of $G_{\textnormal{Id}}(X)$, for other cases of $k$ and $r$. Consequently, every connected component of $G_{\textnormal{Id}}(R)$ is planar and hence, the graph $G_{\textnormal{Id}}(R)$ is planar.
 \begin{figure}[h!]
			\centering
			\includegraphics[width=0.9 \textwidth]{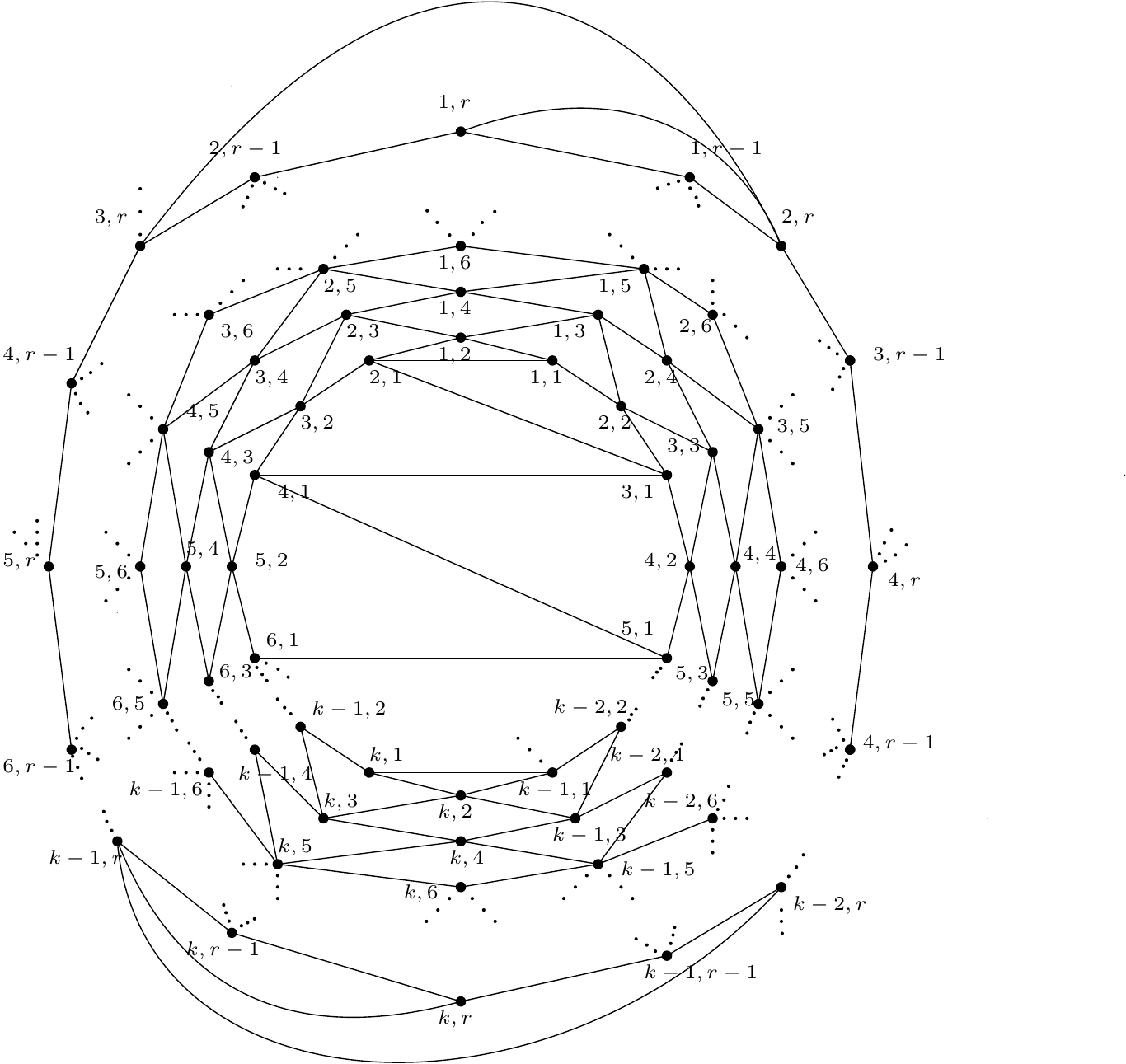}
			\caption{Planar drawing of $G_{\textnormal{Id}}(R_1 \times R_2)$. }
   \label{planar_idempotent}
\end{figure}
\end{proof}

The following examples illustrate that one of the conditions given in Theorem \ref{planarity} is necessary for the planarity of $G_{\textnormal{Id}}(R)$. 

\begin{example}
Consider $R \cong R_1 \times R_2$ such that $R_1 = \frac{\mathbb{Z}_3[x]}{\langle x^2 \rangle }$ and $R_2 = \mathbb{Z}_2$. Then note that (i) $char(R_1) \neq 2$ and $(R_1, +) \neq \langle \text{Id}(R_1) \rangle$, (ii) $(R_2, +) = \langle \text{Id}(R_2) \rangle$ and $char(R_2) = 2$. For the vertices $u_1 = x + \langle x^2 \rangle $, $u_2 = 2x + \langle x^2 \rangle $, $u_3 = x + 1 + \langle x^2 \rangle $, $u_4 = 2x +2  + \langle x^2 \rangle $, $u_5 = x + 2 + \langle x^2 \rangle $, $u_6 = 2x + 1 + \langle x^2 \rangle $ of $G_{\textnormal{Id}}(R_1)$, note that $u_1 \sim u_2 \sim \cdots \sim u_6 \sim u_1$ be the induced cycle of $G_{\textnormal{Id}}(R_1)$. Now consider the set $X = \{ x_1, x_2, x_3, x_4, x_5 \}$, where $x_1 = [(u_1,0), (u_2,0), (u_3,0)]$, $x_2= [(u_4,0), (u_5,0), (u_6,0)]$, $x_3 = (u_1,1)$, $x_4= (u_6, 1)$, $x_5 = [(u_2,1), (u_3,1), (u_4,1), (u_5,1)]$. Then the subgraph of $G_{\textnormal{Id}}(R_1 \times R_2)$ which is induced by the set $X$ is homeomorphic to $K_5$. Hence, $G_{\textnormal{Id}}\left(\frac{\mathbb{Z}_3[x]}{\langle x^2 \rangle } \times  \mathbb{Z}_2 \right)$ is not planar.
\end{example}

\begin{example}
Consider $R \cong R_1 \times R_2$ such that $R_1 = \frac{\mathbb{Z}_3[x]}{\langle x^2 \rangle }$ and $R_2 = \mathbb{Z}_3$. Then note that (i) $char(R_1) \neq 2$ and $(R_1, +) \neq \langle \text{Id}(R_1) \rangle$, (ii) $(R_2, +) = \langle \text{Id}(R_2) \rangle$ and $char(R_2) \neq 2$. For the vertices $u_1 = x + \langle x^2 \rangle $, $u_2 = 2x + \langle x^2 \rangle $, $u_3 = x + 1 + \langle x^2 \rangle $, $u_4 = 2x +2  + \langle x^2 \rangle $, $u_5 = x + 2 + \langle x^2 \rangle $, $u_6 = 2x + 1 + \langle x^2 \rangle $ of $G_{\textnormal{Id}}(R_1)$, note that $u_1 \sim u_2 \sim \cdots \sim u_6 \sim u_1$ be the induced cycle of $G_{\textnormal{Id}}(R_1)$. Consider the set $Y = \{ y_1, y_2, y_3, y_4, y_5\}$, where $y_1 = [(u_1,0), (u_2,0), (u_3,0)]$, $y_2= [(u_4,0), (u_5,0), (u_6,0)]$, $y_3 = [(u_1,1), (u_2,2), (u_3,2)]$, $y_4= [(u_6, 1), (u_5, 2), (u_4, 2)]$, $y_5 = (u_3,1)$. Then the subgraph of $G_{\textnormal{Id}}(R_1 \times R_2)$ induced by the set $Y$ is homeomorphic to $K_5$. Hence, $G_{\textnormal{Id}}\left(\frac{\mathbb{Z}_3[x]}{\langle x^2 \rangle } \times  \mathbb{Z}_3 \right)$ is not planar.
\end{example}

\begin{example}
Consider $R \cong R_1 \times R_2$ such that $R_1 = \frac{\mathbb{Z}_3[x]}{\langle x^2 \rangle }$ and $R_2 = \frac{\mathbb{Z}_3[x]}{\langle x^2 \rangle}$. Then note that (i) $char(R_1) \neq 2$ and $(R_1, +) \neq \langle \text{Id}(R_1) \rangle$, (ii) $(R_2, +) \neq \langle \text{Id}(R_2) \rangle$ and $char(R_2) \neq 2$. Let $u_1 = x + \langle x^2 \rangle $, $u_2 = 2x + \langle x^2 \rangle $, $u_3 = x + 1 + \langle x^2 \rangle $, $u_4 = 2x +2  + \langle x^2 \rangle $, $u_5 = x + 2 + \langle x^2 \rangle $, $u_6 = 2x + 1 + \langle x^2 \rangle $, $v_1 = 0 + \langle x^2 \rangle $, $v_2 = 1 + \langle x^2 \rangle $ and $v_3 = 2 + \langle x^2 \rangle $ be all the elements of $\frac{\mathbb{Z}_3[x]}{\langle x^2 \rangle}$. Consider the set $Z = \{ z_1, z_2, z_3, z_4, z_5\}$, where $z_1 = [(u_1,v_1), (u_2,v_1), (u_3,v_1)]$, $z_2= [(u_4,v_1), (u_5,v_1), (u_6,v_1)]$, $z_3 = [(u_1,v_2), (u_2,v_3), (u_3,v_3)]$, $z_4= [(u_6, v_2), (u_5, v_3), (u_4, v_3)]$, $z_5 = (u_3,v_1)$. Then the subgraph of $G_{\textnormal{Id}}(R_1 \times R_2)$ induced by the set $Z$ is homeomorphic to $K_5$. Hence, $G_{\textnormal{Id}}\left(\frac{\mathbb{Z}_3[x]}{\langle x^2 \rangle } \times \frac{\mathbb{Z}_3[x]}{\langle x^2 \rangle } \right)$ is not planar.
\end{example}

%%%%%%%%%%%%%%%%%%%%%%%%%%%%%%%%%%%%%%%%%%%%%%%%%%%%%%%%%%%%%%%%%
  
\section{Forbidden graph classes of $G_{\textnormal{Id}}(R)$}
In this section, we classify all the non-local commutative rings $R$ such that $G_{\textnormal{Id}}(R)$ is a split graph, threshold graph and cograph, respectively.  

\begin{theorem}\label{splitgraphtheorem}
Let $R$ be a non-local commutative ring.  Then $G_{\textnormal{Id}}(R)$ is a split graph if and only if $R \cong \mathbb{Z}_2 \times \mathbb{Z}_2 \times \cdots \times \mathbb{Z}_2$.
\end{theorem}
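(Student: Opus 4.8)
The plan is as follows. For the (easy) sufficiency, observe that every element of $\mathbb{Z}_2$ is idempotent, so if $R \cong \mathbb{Z}_2 \times \cdots \times \mathbb{Z}_2$ then $\textnormal{Id}(R) = R$, whence $x + y \in \textnormal{Id}(R)$ for all $x,y \in R$ and $G_{\textnormal{Id}}(R)$ is a complete graph; a complete graph is a split graph (take the clique to be all of the vertices and the independent set to be empty). The substance is the necessity, which I will establish by contraposition: assuming $R \cong R_1 \times \cdots \times R_n$ ($n \ge 2$, each $R_i$ local) is \emph{not} isomorphic to a power of $\mathbb{Z}_2$, I will exhibit an induced $C_4$ or an induced $2K_2$ in $G_{\textnormal{Id}}(R)$, so that $G_{\textnormal{Id}}(R)$ fails to be split by Lemma \ref{splitgraph}. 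Note that the third forbidden subgraph $C_5$ will never be needed.

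The first step of the necessity argument is a reduction: since the decomposition into local rings is unique and a local ring of order $2$ must be $\mathbb{Z}_2$, the hypothesis forces some factor, say $R_1$, to satisfy $|R_1| \ge 3$. Recall that $\textnormal{Id}(R) = \textnormal{Id}(R_1) \times \cdots \times \textnormal{Id}(R_n)$ and $\textnormal{Id}(R_i) = \{0,1\}$ because each $R_i$ is local, so two elements of $R$ are adjacent in $G_{\textnormal{Id}}(R)$ exactly when every coordinate of their sum lies in $\{0,1\}$. I then split on $char(R_1)$. If $char(R_1) = 2$, choose $a \in R_1 \setminus \{0,1\}$; the plan is to check that the four distinct vertices $(0,\dots,0)$, $(1,0,\dots,0)$, $(a,0,\dots,0)$, $(1+a,0,\dots,0)$ induce a $2K_2$, the only two edges among them being $(0,\dots)\!\sim\!(1,\dots)$ and $(a,\dots)\!\sim\!(1+a,\dots)$ — using $2a = 0$ in $R_1$, every remaining pair has first-coordinate sum in $\{a,\,1+a\}$, which is disjoint from $\{0,1\}$. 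If $char(R_1) \ge 3$, then $0,1,-1$ are pairwise distinct in $R_1$, and the plan is to check that the four vertices $(0,\dots,0)$, $(1,0,\dots,0)$, $(-1,0,\dots,0)$, $(1,1,0,\dots,0)$ induce the $4$-cycle $(0,\dots)\!\sim\!(1,\dots)\!\sim\!(-1,\dots)\!\sim\!(1,1,0,\dots)\!\sim\!(0,\dots)$, the two would-be chords failing because their first-coordinate sums are $-1$ and $2$, neither of which lies in $\{0,1\}$ when $char(R_1) \ge 3$. In both cases Lemma \ref{splitgraph} yields that $G_{\textnormal{Id}}(R)$ is not a split graph.

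The verifications are routine coordinate checks; the only real choice is the selection of these four-vertex gadgets. The one point requiring care — and precisely the reason for splitting on the characteristic — is that the $C_4$-gadget degenerates when $char(R_1) = 2$, since there $-1 = 1$ and two of its vertices coincide, which is why characteristic two is handled separately via the $2K_2$-configuration. I do not anticipate any serious obstacle beyond this bookkeeping of adjacencies.
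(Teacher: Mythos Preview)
Your proof is correct and follows the same overall strategy as the paper --- exhibit a forbidden induced subgraph and invoke Lemma~\ref{splitgraph}. The paper, however, does not split on characteristic: it simply picks $a_1 \in R_1 \setminus \textnormal{Id}(R_1)$ and asserts that $\{(0,\dots,0),(1,0,\dots,0),(a_1,0,\dots,0),(-a_1,0,\dots,0)\}$ induces $2K_2$. Your two-case treatment is in fact the more careful one, since the paper's single gadget degenerates when $a_1 = -a_1$ (i.e.\ $char(R_1)=2$) or when $a_1=-1$ is the only available choice (e.g.\ $R_1=\mathbb{Z}_3$, where then $-a_1=1$ already lies in the first pair); your $2K_2$ in characteristic~$2$ and induced $C_4$ in characteristic~$\ge 3$ sidestep exactly these edge cases.
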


\begin{proof}
Let $R$ be a non-local commutative ring. Then $R \cong R_1 \times R_2 \times  \cdots  \times R_n$, where $n \geq 2$. Assume that the cardinality of one of the local ring $R_i$ is greater than $2$. Without loss of generality, let $|R_1| > 2$. Consider the sets $ X = \{(0,0,\ldots,0), (1, 0, \ldots,0)\}$ and $ Y = \{(a_1, 0, \ldots, 0), (-a_1, 0, \ldots, 0)\}$, where $a_1 \in R_1 \setminus \text{Id}(R_1)$. Then the subgraph of $G_{\textnormal{Id}}(R)$ induced by the set $X \cup Y$ is isomorphic to $2K_2$, a contradiction. 
It follows that $|R_i| = 2$ for each $i$, where $1 \le i \le n$. Therefore, $R \cong \mathbb{Z}_2 \times \mathbb{Z}_2 \times \cdots \times \mathbb{Z}_2$.
For the converse, if $R \cong \mathbb{Z}_2 \times \mathbb{Z}_2 \times \cdots \times \mathbb{Z}_2$, the note that $G_{\textnormal{Id}}(R) \cong K_{|R|}$. Hence, $G_{\textnormal{Id}}(R)$ is a split graph.
\end{proof}

\begin{corollary}
 Let $R$ be a non-local commutative ring. Then the following conditions are equivalent:
 \begin{enumerate}
 \item[{\rm(i)}] $G_{\textnormal{Id}}(R)$ is a split graph.
   \item[{\rm(ii)}] $G_{\textnormal{Id}}(R)$ is a threshold graph.
   % \item[{\rm(iii)}] $G_{\textnormal{Id}}(R)$ does not contain $2K_2$ as an induced subgraph.
   \item[{\rm(iii)}] $R \cong \mathbb{Z}_2 \times \mathbb{Z}_2 \times \cdots \times \mathbb{Z}_2$.
 \end{enumerate} 
\end{corollary}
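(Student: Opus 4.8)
The plan is to reduce everything to Theorem \ref{splitgraphtheorem}, which already supplies the equivalence $\mathrm{(i)}\Leftrightarrow\mathrm{(iii)}$. Thus it suffices to establish $\mathrm{(iii)}\Rightarrow\mathrm{(ii)}$ and $\mathrm{(ii)}\Rightarrow\mathrm{(i)}$; together with Theorem \ref{splitgraphtheorem} these close the cycle $\mathrm{(i)}\Leftrightarrow\mathrm{(iii)}\Rightarrow\mathrm{(ii)}\Rightarrow\mathrm{(i)}$ and yield the desired equivalence.

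For $\mathrm{(iii)}\Rightarrow\mathrm{(ii)}$, assume $R\cong\mathbb{Z}_2\times\mathbb{Z}_2\times\cdots\times\mathbb{Z}_2$. Since every component $\mathbb{Z}_2$ satisfies $0^2=0$ and $1^2=1$, every element of $R$ is idempotent, so for all $x,y\in R$ we have $x+y\in R=\mathrm{Id}(R)$. Hence any two distinct vertices of $G_{\textnormal{Id}}(R)$ are adjacent, i.e. $G_{\textnormal{Id}}(R)\cong K_{|R|}$ — exactly the observation already made in the converse part of the proof of Theorem \ref{splitgraphtheorem}. It then remains to note that a complete graph is a threshold graph: any induced subgraph of $K_m$ on four vertices is $K_4$ itself, so $K_m$ contains no induced $P_4$, $C_4$ or $2K_2$, which is the forbidden-subgraph description of threshold graphs recalled in Section~2. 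Therefore $G_{\textnormal{Id}}(R)$ is a threshold graph.

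The implication $\mathrm{(ii)}\Rightarrow\mathrm{(i)}$ is immediate from the fact, stated in Section~2, that every threshold graph is a split graph; alternatively one checks directly that a graph with no induced $P_4$ also has no induced $C_5$ (the induced subgraph of $C_5$ on four consecutive vertices is a $P_4$), so forbidding $P_4$, $C_4$, $2K_2$ forbids $C_4$, $C_5$, $2K_2$, and Lemma \ref{splitgraph} applies. I do not expect any genuine obstacle here: each step is either a citation of Theorem \ref{splitgraphtheorem} or a one-line remark about $K_n$ and the threshold/split hierarchy, the only mild subtlety being the $C_5$ check just mentioned, which is avoided entirely if one simply invokes the stated inclusion of threshold graphs in split graphs.
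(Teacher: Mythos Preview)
Your proposal is correct and matches the paper's intended reasoning: the paper states this result as a corollary with no explicit proof, relying on exactly the ingredients you invoke---Theorem~\ref{splitgraphtheorem} for $\mathrm{(i)}\Leftrightarrow\mathrm{(iii)}$, the observation (made in the converse part of that theorem's proof) that $G_{\textnormal{Id}}(\mathbb{Z}_2^n)\cong K_{|R|}$ for $\mathrm{(iii)}\Rightarrow\mathrm{(ii)}$, and the inclusion ``every threshold graph is a split graph'' stated in Section~2 for $\mathrm{(ii)}\Rightarrow\mathrm{(i)}$.
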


  \begin{theorem}
  Let $R \cong R_1 \times R_2 \times  \cdots  \times R_n$ ($n \geq 2$) be a non-local commutative ring. Then $G_{\textnormal{Id}}(R)$ is a cograph if and only if one of the following holds: 
  \begin{enumerate}
 \item[{\rm(i)}] $char(R_i) = 2$ for each $i$.
 \item[{\rm(ii)}] $R \cong \mathbb{Z}_3 \times R_2 \times R_3 \times \cdots \times R_n$ with $char(R_i) = 2$, where $2 \leq i \leq n$.
 \end{enumerate}
  \end{theorem}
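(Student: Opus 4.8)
The plan is to split into the "only if" and "if" directions, and for the "only if" direction to show that whenever neither (i) nor (ii) holds we can exhibit an induced $P_4$ in $G_{\textnormal{Id}}(R)$. Write $R \cong R_1 \times \cdots \times R_n$ with each $R_i$ local. The key structural input is that $x+y \in \textnormal{Id}(R)$ iff $x_i + y_i \in \textnormal{Id}(R_i)$ for every $i$, so adjacency in $G_{\textnormal{Id}}(R)$ is the "coordinatewise AND" of adjacency in the factors; in particular, if two vertices agree in all but one coordinate, they are adjacent iff the differing coordinates sum to an idempotent of that factor. First I would handle the case where some factor, say $R_1$, has $char(R_1) \ge 3$ and $R_1 \ne \mathbb{Z}_3$ (equivalently $|R_1| \ge 4$ or $R_1$ has a nontrivial structure forcing a long path). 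By Theorem \ref{idempotent_subringpath} (applied to $R_1$, which has no nontrivial idempotents when $R_1$ is a field-free local ring — or directly via Corollary \ref{Idempotent_path}), $G_{\textnormal{Id}}(R_1)$ contains an induced path on at least $4$ vertices, say $a_1 \sim a_2 \sim a_3 \sim a_4$ with $a_1 \nsim a_3$, $a_1 \nsim a_4$, $a_2 \nsim a_4$. Then the vertices $(a_1, 0, \ldots, 0), (a_2, 0, \ldots, 0), (a_3, 0, \ldots, 0), (a_4, 0, \ldots, 0)$ induce a $P_4$ in $G_{\textnormal{Id}}(R)$ (here $0$ in coordinate $i\ge 2$ is adjacent to $0$ since $0$ is idempotent), so $G_{\textnormal{Id}}(R)$ is not a cograph. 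This already forces every $R_i$ to be either of characteristic $2$ or isomorphic to $\mathbb{Z}_3$.

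Next I would rule out the remaining bad configuration: two factors isomorphic to $\mathbb{Z}_3$, or one $\mathbb{Z}_3$ factor together with a factor of characteristic $\ne 2$ that is not $\mathbb{Z}_3$ (the latter is already excluded by the previous step, so the essential remaining case is $R_1 \cong R_2 \cong \mathbb{Z}_3$). In $G_{\textnormal{Id}}(\mathbb{Z}_3)$ we have the path $1 \sim 2 \sim 0$ (since $1+2 = 0$, $2+0 = 2$... wait — carefully: the nontrivial path in $G_{\textnormal{Id}}(\mathbb{Z}_3)$ is $0 \sim 1 \sim 2$ because $0+1=1$ is idempotent, $1+2=0$ is idempotent, while $0+2 = 2$ is not). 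Using this I would take vertices of the form $(a, b, 0, \ldots, 0)$ with $a, b \in \{0,1,2\}$ and check that among the $9$ such vertices there is an induced $P_4$: e.g. $(0,1,\mathbf{0}) \sim (1,1,\mathbf{0}) \sim (1,2,\mathbf{0}) \sim (2,2,\mathbf{0})$, verifying that the three non-consecutive pairs are non-adjacent (each fails in exactly one of the first two coordinates). A short case check confirms this is an induced $P_4$, so $R$ cannot have two $\mathbb{Z}_3$ factors; combined with the first step this gives exactly (i) or (ii).

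For the converse I would show that in cases (i) and (ii) the graph $G_{\textnormal{Id}}(R)$ has no induced $P_4$. In case (i), $char(R_i) = 2$ means $x_i + y_i = x_i - y_i$ and one checks (via Theorem \ref{idempotent_subringpath} or directly) that each $G_{\textnormal{Id}}(R_i)$ is a disjoint union of edges and isolated vertices, i.e. a disjoint union of $K_1$'s and $K_2$'s, so each component of $G_{\textnormal{Id}}(R)$ is a product-type graph that turns out to be complete multipartite / a disjoint union of complete graphs — more precisely each $G_{\textnormal{Id}}(R_i)$ being $P_1$ or $P_2$ forces $G_{\textnormal{Id}}(R)$ restricted to a component to be complete, hence a cograph. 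In case (ii), $G_{\textnormal{Id}}(\mathbb{Z}_3)$ is the path $0 \sim 1 \sim 2$ (a $P_3$) and each other factor contributes a $K_1$ or $K_2$; I would argue that the coordinatewise-AND product of a $P_3$ with copies of $K_1, K_2$ contains no induced $P_4$, by a direct structural analysis of components. The step I expect to be the main obstacle is precisely this converse, specifically pinning down the component structure of $G_{\textnormal{Id}}(R)$ as a graph product and verifying $P_4$-freeness of the resulting components; the "only if" direction is routine once the right $P_4$ witnesses are written down, but the converse requires a clean description of the components (likely: a component is either complete, or a "blow-up" of $P_3$ of a restricted type) and a careful check that no induced $P_4$ can arise from mixing the $P_3$-factor with the $K_2$-factors.
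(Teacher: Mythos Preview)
Your overall strategy matches the paper's: exhibit an induced $P_4$ whenever (i) and (ii) both fail, and verify $P_4$-freeness directly in cases (i) and (ii). However, your explicit witness for the two-$\mathbb{Z}_3$-factors case is wrong: $(0,1)+(1,1)=(1,2)$ and $2$ is not idempotent in $\mathbb{Z}_3$, so $(0,1)\nsim(1,1)$ and your four vertices do not even form a path. A correct witness (essentially the one the paper uses for $n=2$) is
\[
(0,0)\sim(1,1)\sim(2,0)\sim(2,1),
\]
where the consecutive sums $(1,1),(0,1),(1,1)$ have idempotent coordinates, while the three non-consecutive sums $(2,0),(2,1),(0,2)$ each contain the non-idempotent $2$. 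Once this is fixed, your ``only if'' argument is a slightly tidier reorganisation of the paper's: you first force every factor to be of characteristic $2$ or isomorphic to $\mathbb{Z}_3$ by embedding a $P_4$ from a single bad factor, and then exclude two $\mathbb{Z}_3$ factors; the paper instead splits on $n\ge 3$ versus $n=2$ and treats several subcases, but the underlying $P_4$ witnesses are the same.

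For the converse in case (ii) your sketch (``a direct structural analysis of components'') is too vague to stand as a proof. The paper handles it by a brute-force case split on the $\mathbb{Z}_3$-coordinate $a_1\in\{0,1,2\}$ of the first vertex of a hypothetical $P_4$. A cleaner way, in the spirit of what you are gesturing at, is this: each characteristic-$2$ factor contributes a perfect matching (not ``$K_1$'s and $K_2$'s''; every $x$ is adjacent to $x+1$), so within any connected component of $G_{\textnormal{Id}}(\mathbb{Z}_3\times R_2\times\cdots\times R_n)$ the coordinates in positions $2,\ldots,n$ lie in fixed pairs $\{c_i,c_i+1\}$ and hence always sum to an idempotent. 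Adjacency within a component therefore depends only on the $\mathbb{Z}_3$-coordinate, and the component is the graph on three vertex classes $V_0,V_1,V_2$ where $V_0,V_2$ are cliques, $V_1$ is independent, the pairs $V_0$--$V_1$ and $V_1$--$V_2$ are complete bipartite, and $V_0$--$V_2$ is empty. Its complement is $K_m\cup K_{m,m}$ with $m=2^{n-1}$, which is visibly a cograph, and hence so is the component.
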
 
  
\begin{proof}
 First, suppose that $G_{\textnormal{Id}}(R)$ is a cograph. Let $n \geq 3$. Assume that $char(R_i) \neq 2$ for some $i$. Without loss of generality, assume that both $char(R_1)$ and $char(R_2)$ are not equal to $2$. Then note that the vertices $(0,0,\ldots,0)$, $(1, 0, \ldots,0)$, $(-1, 1, 0, \ldots, 0)$ and $ (1, -1, 0, \ldots, 0)$ induces a subgraph isomorphic to $P_4$, a contradiction. Now, suppose that the characteristic of one of the local ring $R_i$ is not two. Without loss of generality, let $char(R_1) \neq 2$. If $char(R_1) \neq 3$, then there exists $a_1 \in R_1$ such that $2a_1 \notin \textnormal{Id}(R_1)$. It follows that the subgraph induced by the set \{$(a_1, 1, 0, \ldots, 0),  (1-a_1,0, \ldots, 0), (a_1-1, 1, 1,0, \ldots, 0), (1-a_1, -1, -1, )$\} is isomorphic to $P_4$, a contradiction. Next, let $char(R_1) = 3$ but $R_1 \ncong \mathbb{Z}_3$. Then by Theorem \ref{idempotent_subringpath}, note that $G_{\textnormal{Id}}(R_1)$ contains at least one induced cycle of length at least six. Suppose that $x_1 \sim x_2 \sim x_3 \sim x_4 \sim x_5 \sim \cdots \sim x_k \sim x_1$ ($k \ge 6$) is an induced cycle in $G_{\textnormal{Id}}(R_1)$. Then note that $P : (x_1, 0, \ldots,0) \sim (x_2, 0, \ldots, 0) \sim (x_3, 0, \ldots,0) \sim (x_4, 0, \ldots, 0)$ is an induced path graph of four vertices, a contradiction. It follows that either $R \cong R_1 \times R_2 \times  \cdots  \times R_n$ or  $R \cong \mathbb{Z}_3 \times R_2 \times  \cdots  \times R_n$, where $char(R_i) = 2$ for each $i$.
 
 Now let $R \cong R_1 \times R_2$. Suppose $char(R_i) \geq 4$ for some $i$. Then by Theorem \ref{idempotent_subringpath}, we get an induced subgraph isomorphic to $P_4$, a contradiction. Therefore, $char(R_i) \leq 3$. Let $char(R_i) = 3$ for each $i \in \{ 1,2\}$. By Theorem \ref{idempotent_subringpath}, we have induced paths $a_1 \sim a_2 \sim a_3$ in $G_{\textnormal{Id}}(R_1)$ and $b_1 \sim b_2 \sim b_3$ in $G_{\textnormal{Id}}(R_2)$. It follows that the graph $G_{\textnormal{Id}}(R)$ contains an induced path $P: (a_1, 0) \sim (a_2, 1) \sim (a_3, 0) \sim (a_3, 1)$ of four vertices, a contradiction. Therefore, $char(R_i) = 2$ for some $i$. Next let, $char(R_1) = 3$ with $R_1 \ncong \mathbb{Z}_3$ and $char(R_2) = 2$. Then by Theorem \ref{idempotent_subringpath}, we get an induced cycle, $x_1 \sim x_2 \sim \cdots \sim x_k (k \ge 2)$, of length at least six in $G_{\textnormal{Id}}(R_1)$. Consequently, $(x_1,0) \sim (x_2,0), \sim (x_3,0) \sim (x_4,0)$ is an induced path of $4$ vertices in  $G_{\textnormal{Id}}(R_1 \times R_2)$, a contradiction. 
 
 Conversely, assume that $char(R_i) = 2$ for each $i$, where $1 \leq i \leq n$. Let $(x_1, x_2, \ldots, x_n) \sim (y_1, y_2, \ldots, y_n) \sim (z_1, z_2, \ldots, z_n) \sim (w_1, w_2, \ldots, w_n)$ be an induced subgraph of $G_{\textnormal{Id}}(R)$ isomorphic to $P_4$. Then by Proposition \ref{idempotent_degree}, for each $u \in V(G_{\textnormal{Id}}(R_i))$, we have $deg(u) = 1$. It follows that $(x_1, x_2, \ldots, x_n) \sim (z_1, z_2, \ldots, z_n)$, which is not possible. Consequently, $G_{\textnormal{Id}}(R)$ is a cograph.
 Next, let $R \cong \mathbb{Z}_3 \times R_2 \times R_3 \times \cdots \times R_n$, where $char(R_i) = 2$ for each $i \in \{2,3,\ldots, n\}$. Let if possible, there exists an induced path $P_4 :  \ u_1 \sim u_2 \sim u_3 \sim u_4$, where $u_1 = (a_1, a_2, \ldots, a_n)$, $u_2 = (b_1, b_2, \ldots, b_n)$, $u_3 = (c_1, c_2, \ldots, c_n)$ and $u_4 = (d_1, d_2, \ldots, d_n)$. If $a_1 = 0$, then $c_1 =2 = d_1 $. Otherwise, $u_1 \sim u_3$, a contradiction. It follows that $b_1 = 1$ and so $u_2 \sim u_4$, a contradiction. Now, if $a_1 = 1$, then $c_1 = 1= d_1 $. It follows that $u_3 \nsim u_4$ in $G_{\textnormal{Id}}(R)$, a contradiction. Similarly, for $a_1 = 2$, we get $c_1 = d_1 = 0$. Since $u_2 \nsim u_4$, we must have $b_1 = 2$. It follows that $u_2 \nsim u_3$, which is not possible. Hence, $G_{\textnormal{Id}}(R)$ is a cograph.
\end{proof}

\vspace{.3cm}
\textbf{Acknowledgement:} The first author gratefully acknowledges Birla Institute of Technology and Science (BITS) Pilani, Pilani campus, India, for providing financial support.

%%%%%%%%%%%%%%%%%%%%%%%%%%%%%%%%%%%%%%%%%%%%%%%%%%%%%%%%%%%%%%%%

%\bibliographystyle{abbrv}
%	\bibliography{ref}
\end{document}